\newcommand{\real}{\mathds{R}}
\newcommand{\rn}{{\mathds{R}^n}}
\newcommand{\rno}{{\mathds{R}^n\setminus\{0\}}}
\newcommand{\comp}{\mathds{C}}
\newcommand{\supp}{\operatorname{supp}}
\newcommand{\Ee}{\mathds E}
\newcommand{\Pp}{\mathds P}
\newcommand{\I}{\mathds 1}
\newcommand{\Kato}{\mathcal{S}_K}
\newcommand{\normal}{\color{black}}
\numberwithin{equation}{section}
\theoremstyle{plain}
\newtheorem{theorem}{Theorem}
\newtheorem{lemma}[theorem]{Lemma}
\newtheorem{proposition}[theorem]{Proposition}
\newtheorem{corollary}[theorem]{Corollary}
\theoremstyle{definition}
\newtheorem{remark}[theorem]{Remark}
\newtheorem{definition}[theorem]{Definition}
\newtheorem{example}[theorem]{Example}
\newtheorem*{assumption-A}{Assumption~A}
\numberwithin{theorem}{section}
\begin{document}

\allowdisplaybreaks

\title[Level and collision sets of Feller processes]{\bfseries On level and collision sets of some Feller processes}

\author[V.~Knopova]{Victoria Knopova}
\address[V.~Knopova]{V.\,M.\ Glushkov Institute of Cybernetics\\NAS of Ukraine\\30187, Kiev, Ukraine}
\email{vic\underline{ }knopova@gmx.de}
\thanks{Financial support through the Scholarship for Young Scientists 2012--2014, Ukraine (for Victoria Knopova) and DFG (grant Schi 419/8-1)
(for Ren\'{e} L.\ Schilling) is gratefully acknowledged.}

\author[R.\,L.~Schilling]{Ren\'e L.\ Schilling}
\address[R.\,L.~Schilling]{TU Dresden\\Fachrichtung Mathematik\\Institut f\"{u}r Mathematische Stochastik\\01062 Dresden, Germany}
\email{rene.schilling@tu-dresden.de}

\subjclass[2010]{Primary 60G17. Secondary: 60J75; 60J25; 28A78; 35S05.}
\keywords{Feller process, pseudo-differential operator, symbol, Hausdorff dimension, image sets, level sets, regular points, polar sets, Kato class}

\date{\today}

\begin{abstract}
    This paper is about lower and upper bounds for the Hausdorff dimension of the level and collision sets of a class of Feller processes. Our approach is motivated by analogous results for L\'evy processes by Hawkes \cite{Ha74} (for level sets) and Taylor \cite{Ta66} and Jain \& Pruitt \cite{JP69} (for collision sets).  Since Feller processes lack independent or stationary increments, the methods developed for L\'evy processes cannot be used in a straightforward manner. Under the assumption that the Feller process possesses a transition probability density, which admits lower and upper bounds of a certain type, we derive sufficient conditions for regularity and non-polarity of points; together with suitable time changes this allows us to get upper and lower bounds for the Hausdorff dimension.
\end{abstract}

\maketitle

\section{Introduction}\label{intro}

In this paper, we study the Hausdorff dimension of the level and collision sets of a certain class of strong Feller processes; concrete examples were constructed in \cite{KK13a} and \cite{KK13b} under rather general assumptions, see Assumption~A below. This assumption guarantees, in particular, that the process is a strong Feller process admitting a transition probability density which enjoys upper and lower estimates of ``compound kernel'' type, see \eqref{set-e35} and \eqref{set-e36}.

Let us briefly describe the problems which are discussed in this paper. Let $X$ be a (strong) Feller process with values in $\rn$. Then
\begin{equation}\label{intro-e05}
    \left\{s \,:\, X_s(\omega)\in D\right\}
    \quad\text{for any Borel set $D\subset\rn$}
\end{equation}
denotes a level set of $X$, i.e.\ the (random) set of times when $X$ visits the set $D$.

We adapt the techniques from \cite{Ha74}, see also \cite{Ha70} and \cite{Ha71}, to obtain bounds on the Hausdorff dimension of such level sets. The idea used in  \cite{Ha74} is based on the notion of subordination (in the sense of Bochner, i.e.\ a random time change by an independent increasing L\'evy process), and on knowledge of the Hausdorff dimension of the range of a $\gamma$-stable subordinator $T_t^{\gamma}$ (cf.\ Lemma~\ref{ha} below).

The proof presented in \cite{Ha74} heavily relies on the fact that $X$ is a L\'evy process; a key ingredient is a criterion for the polarity of points in terms of the characteristic exponent of the L\'evy process $X$.  For general Markov processes such a result is not available, and so we need an essentially different approach.
The first problem which we encounter in the investigation of the level set \eqref{intro-e05}, is how to check that the process $X$ a.s.\ enters  $D$; in other words: when is the starting point $x$ \emph{regular} for $D$. We can overcome this problem using some abstract potential theory and the Kato class; this requires, however, upper and lower estimates for the transition density $p_t(x,y)$ of $X$ which allows us to characterize the notion of a Kato class (with respect to $p_t(x,y)$; see Definition~\ref{Kato}) and regular points for $D$. For $d$-sets this problem simplifies and, at least for certain values of $d$, any point in the topological boundary $\partial D$ is regular for $D$. Using the structure of the estimates for $p_t(x,y)$, we can establish similar assertions on the polarity of sets and regularity of points for the subordinate (i.e.\ time changed) process $X_{T_t^{\gamma}}$.

In Theorem~\ref{H-bounds} we use the indices $\gamma_{\inf}$ and $\gamma_{\sup}$---these characterize the set $D$ ``in the eyes'' of the time-changed process $X_{T_t^{\gamma}}$---to obtain uniform upper and lower bounds on the random set $\dim \{s\,:\, X_s(\omega)\in D\}$; here $D$ is a $d$-set and the process starts from a point $x$ which belongs to the topological closure $\overline{D}$ of $D$.  In the one-dimensional case we obtain (Proposition~\ref{cor-H}) the exact value of the Hausdorff dimension of the zero-level set $\{s:\, X_s(\omega)=0\}$.  This result can be pushed a bit further: in dimension one we show (Proposition~\ref{prop-H2}) that this value is also the Hausdorff dimension of the set of times, at which two independent copies of $X$ meet.

The second half of the paper is on collision sets. Motivated by our findings in Proposition~\ref{prop-H2} and the results from \cite{Ta66} and \cite{JP69}, we investigate  the Hausdorff dimension of the collision set
\begin{equation*}
    A(\omega):=
    \left\{x\in \real\,:\, \, X^1_t(\omega)=X_t^2(\omega)=x\quad \text{for some $t>0$}\right\}
\end{equation*}
of two independent copies $X^1$ and $X^2$ of $X$; from now on we assume that $X$ is one-dimensional and recurrent. Since recurrence reflects the behaviour of the process as time tends to infinity, it cannot be deduced from Assumption~A (which is essentially a condition on short times). Some examples of recurrent processes which fit our setting  are given in  Section~\ref{rec}. In order to get bounds on the Hausdorff dimension of $A(\omega)$, we compare the polar sets of the process  $(X^1, X^2)$ with the polar sets of symmetric stable processes with  parameters  $\alpha$ and $\beta$. The idea to use the range of a stable process as a ``gauge'' in order to express the Hausdorff dimension of a Borel set in $\rn$ is due to Taylor \cite{Ta66}; in its original version it heavily relies on the fact that the process $X$  is a L\'evy process. In the present paper, we use the symmetric stable (``gauge'') processes in a different way, especially when establishing the lower bound for the Hausdorff dimension.

Let us briefly mention some known results. We refer to \cite{Xi04} for an extensive survey on sample path properties of L\'evy processes, in particular, for various  dimension results on  level, intersection and image sets. Most results essentially depend on the independence and stationarity of increments of L\'evy processes, while for general Markov processes much less is known. For L\'evy-type  processes the behaviour of the symbol of the corresponding generator allows us to  get the results on the Hausdorff dimension of the image sets, see e.g.\ \cite{Sch98}, \cite{KSW14}, and the monograph \cite{BSW14}; in \cite{Sh95} conditions are given, such that Markov processes collide with positive probability, and \cite{SX10} studies the Hausdorff and packing dimensions of the image sets of self-similar processes.

Our paper is organized as follows. In Section~\ref{set} we explain the notation and state our main results. Section~\ref{pot} is devoted to some facts and auxiliary statements from probabilistic potential theory; these are interesting in their own right. The proofs of the main results are given in Sections~\ref{s4} and \ref{s5}. Examples of recurrent processes, which satisfy Assumption~A can be found in Section~\ref{rec}. Finally, the (rather technical) proofs of some auxiliary statements are given in the appendix.

\section{Setting and main results}\label{set}

We begin with the description of the class of stochastic processes which we are going to consider. Denote by $C_\infty^k(\rn)$ and $C_c^k(\rn)$  the spaces of $k$ times continuously differentiable functions which vanish at infinity (with all derivatives) and which are compactly supported, respectively. For $f\in C_\infty^2(\rn)$ we consider the following L\'evy-type operator
\begin{equation}\label{set-e05}
    \mathcal{L}f(x)
    :=a(x) \cdot \nabla f(x) + \int_\rno \big(f(x+h)-f(x)-h \cdot \nabla f(x) \I_{(0,1)}(|h|)\big)m(x,h)\,\mu(dh),
\end{equation}
where $a:\rn\to\rn$, $m:\rn\times\rn\to (0,\infty)$ are measurable functions and $\mu$ is a L\'evy measure, i.e.\ a measure on $\rno$ such that $\int_\rno \big(1\wedge |h|^2\big)\mu(dh)<\infty$.

Denote by $\hat f(x) := (2\pi)^{-n}\int_\rn f(x)e^{-ix\cdot\xi}\,dx$ the Fourier transform. It is not hard to see that we can rewrite $\mathcal{L}$ as a pseudo-differential operator
\begin{equation*}
    \mathcal{L}f(x)
    := - \int_\rn  e^{i\xi \cdot x} q(x,\xi) \hat f(\xi)\,d\xi,
    \qquad f\in C_c^\infty(\rn),
\end{equation*}
with \emph{symbol} $q:\rn\times\rn\to\comp$. The symbol is given by the L\'evy--Khintchine representation
\begin{equation}\label{set-e15}
    q(x,\xi)
    =
    -ia(x)\cdot\xi + \int_\rno \big(1-e^{ih\cdot\xi}+ih\cdot\xi\I_{(0,1)}(|h|)\big) m(x,h)\,\mu(dh).
\end{equation}

We will frequently compare the variable-coefficient operator $ \mathcal{L}$ with an operator $ \mathcal{L}_0$ (with bounded coefficients), defined by
$$
  \mathcal{L}_0 f(x)
    =  - \int_\rn e^{i x \cdot \xi} q(\xi) \hat f(\xi)\,d\xi,
$$
with the real-valued symbol
\begin{equation}\label{set-e20}
    q(\xi) = \int_\rno  \big(1-\cos (\xi \cdot h)\big)\, \mu(dh).
\end{equation}
The symbol $q(\xi)$ is the characteristic exponent of a symmetric L\'evy process $Z_t$ in $\rn$, i.e.\ $\Ee e^{i \xi\cdot Z_t} = e^{-tq(\xi)}$.
Define
\begin{equation*}
    q^U(\xi)
    := \int_\rno \big((\xi \cdot h)^2\wedge 1\big)\,\mu(dh)
    \quad\text{and}\quad
    q^L(\xi)
    := \int_{0<|\xi\cdot h|\leq 1} (\xi \cdot h)^2\,\mu(dh)
\end{equation*}
and
\begin{equation*}
    q^*(r)
    := \sup_{\ell\in \mathds{S}^n}  q^U( r\ell),
\end{equation*}
where $\mathds{S}^n$ is the  unit sphere in $\rn$. The functions $q^U$ and $q^L$ are, up to multiplicative constants, upper and lower bounds for $q(\xi)$ (cf.~\cite{KK12a,K13}):
\begin{equation*}
    (1-\cos 1) q^L (\xi)
    \leq q(\xi)
    \leq 2 q^U (\xi).
\end{equation*}
 The key regularity assumption in \cite{KK13a,KK13b} is the following comparison result:
\begin{equation}\label{set-e30}
    \exists \kappa \geq 1 \quad
    \forall r\geq 1\::\:
    q^*(r) \leq  \kappa \inf_{\ell \in \mathds{S}^n} q^L( r \ell).
\end{equation}
This condition means that the function $q(\xi)$  does not oscillate ``too much''. For example, if $q(\xi)=|\xi|^\alpha$ one can check that \eqref{set-e30} holds true with $\kappa=2/\alpha$. Motivated by this example, we use the notation
\begin{equation}\label{set-e31}
    \alpha:= 2/\kappa
\end{equation}
with $\kappa\geq 1$ from \eqref{set-e30}. Moreover, \eqref{set-e30} implies, see \cite{KK12a,K13}, that
\begin{equation}\label{set-e33}
    q(\xi)\geq c |\xi|^\alpha, \quad |\xi|\geq 1.
\end{equation}
We refer to \cite{KK12a} for examples which illustrate this condition.

In  \cite{KK13b} it was shown that, under the following assumptions
\begin{assumption-A}\mbox{}
\begin{itemize}
\item[\bfseries 1)]
    The L\'evy measure $\mu$ is such that \eqref{set-e30} holds;
\item[\bfseries 2)]
    There exist constants $c_1, c_2, c_3>0$, such that $|a(x)|\leq c_1$ and $c_2\leq m(x,u)\leq c_3$;
\item[\bfseries 3)]
    The functions $a(x)$ and $ m(x,u)$ are locally  H\"older continuous in $x$ with some index $\lambda\in (0,1]$;
\item[\bfseries 4)]
    Either $\alpha>1$, with $\alpha$ as in \eqref{set-e30}, \eqref{set-e31}, or $a(x)\equiv 0$ and $m(x,h)=m(x,-h)$, $\mu(dh)=\mu(-dh)$,
\end{itemize}
\end{assumption-A}

\noindent
the operator $\mathcal{L}$ extends to the generator  of a (strong) Feller process $X$, which has a transition probability density $p_t(x,y)$. This density is continuous for $(t,x,y)\in [t_0,\infty)\times \rn\times \rn$, $t_0>0$, and satisfies the following upper and lower bounds:
\begin{equation}\label{set-e35}
    p_t(x,y) \geq \rho_t^n  f_{\textrm{low}}(x \rho_t),\quad t\in (0,1],\; x,y\in \rn,
\end{equation}
and
\begin{equation}\label{set-e36}
    p_t(x,y)\leq   \rho_t^n \big(f_{\textrm{up}} (\rho_t \:\cdot) * Q_t\big)(y-x),\quad t\in (0,1],\; x,y\in \rn,
\end{equation}
where $(Q_t)_{t\geq 0}$ is  a family of sub-probability measures,
\begin{gather*}
    \rho_t :=\inf\{ r>0: \,\, q^*(r)\geq 1/t\}, \\
    f_{\textrm{low}}(z):= a_1 (1-a_2|z|)_+
    \quad\text{and}\quad
    f_{\textrm{up}}(z):=a_3 e^{-a_4 |z|},\quad z\in \real;
\end{gather*}
($a_i>0$, $i=1,\dots,4$, are constants and $x_+:= \max(x,0)$).  The family of sub-probability measures $(Q_t)_{t\geq 0}$ is explicitly constructed in \cite{KK13b}; for our purposes the exact form of the $Q_t$ is not important.

\medskip\noindent
\boldmath
\textbf{Unless otherwise specified, $X=(X_t)_{t\geq 0}$ will always denote an $\rn$-valued Feller process as above, with law  $\Pp^x(X_t\in dy) = p_t(x,y)\,dy$, $t>0$.}
\unboldmath

\medskip

There are many Feller and L\'evy-type processes satisfying the conditions required in Assumption~A. Note that the integro-differential structure of the generator---as in \eqref{set-e15}, but with a jump kernel (compensator of the jumping measure) $N(x,dh)$ instead of $m(x,h)\,\mu(dy)$ and with a second-order term---is, in fact, necessary for Feller processes and more general semimartingales, at least if the test functions $C_c^\infty(\rn)$ are in the domain of the generator, see \cite{BSW14}. This means that the main restriction is the fact that $N(x,dy)$ is absolutely continuous w.r.t.\ some L\'evy measure and the absence of a second-order diffusion part; just as in the L\'evy case, the latter would dominate the short-time path behaviour. Below we give a few typical examples of Feller processes satisfying our assumptions.
\begin{itemize}
\item  Any Rotationally symmetric L\'evy process whose L\'evy measure has a (rotationally symmetric) density $g(|u|)$ satisfying\footnote{We write $f(t)\asymp g(t)$ or $f\asymp g$ if there is an absolute constant $0<c<\infty$ such that $c^{-1}f(t)\leq g(t)\leq cf(t)$ for all $t$ (in the specified domain)}
\begin{equation}\label{set-e38}
     \int_0^a  r^2 g(r)\,dr\asymp  a^2 \int_a^\infty g(r)\,dr.
\end{equation}
 A concrete example when such condition is satisfied is given in Example~\ref{exa1} below.

\item  Any L\'evy process whose  L\'evy measure is radially symmetric, i.e.
$$
   \mu(dh)= \int_0^\infty \int_{\mathds{S}^n} \delta_{r\zeta}(dh)\,m(dr)\,\mu_0(d\zeta),
$$
where $\mu_0$ is a finite measure on $\mathds{S}^n$; we assume, in addition, that $r\mapsto m(\real\setminus (-r^2,r^2))$  is regularly varying at $0$. Under these assumptions $q^L(\xi)\asymp f(|\xi|)$, where $f(|\xi|) = \int_{r|\xi|\leq 1} r^2 |\xi|^2\, m(dr)$ is regularly varying  at infinity as we have the representation
$$
    f(|\xi|)=|\xi|^2 \int_0^{1/|\xi|^2} m\{ r\,:\, r^2>s\}\,ds,
$$
see \cite[Proposition 1.5.8]{BGT}. Fix some $\ell\in \mathds{S}^n$, and rewrite $q^L$ as
\begin{equation}\label{set-e39}
\begin{split}
    q^L(|\xi|\ell)
    &= \int_{|\xi||\ell\cdot h|\leq 1} |\xi|^2 (\ell\cdot h)^2\,\mu(dh)\\
    &= \int_{h\neq 0} \I_{\{|\xi||\ell\cdot h|\leq 1\}} |\xi|^2 (\ell\cdot h)^2\,\mu(dh)\\
    &= \int_0^\infty \int_{\mathds{S}^n} \I_{\{|\xi| r |\ell\cdot \zeta|\leq 1\}} |\xi|^2 r^2 (\ell\cdot \zeta)^2\,m(dr)\,\mu_0(d\zeta).
    \end{split}
    \end{equation}
    Since $|\ell\cdot\zeta|\leq 1$,  we get $\I_{\{|\xi| r |\ell\cdot \zeta|\leq 1\}}\geq \I_{\{|\xi| r \leq 1\}}$, and so
    \begin{align*}
    q^L(|\xi|\ell) &\geq 
    \int_0^\infty \I_{\{|\xi| r \leq 1\}} |\xi|^2 r^2 \,m(dr) \int_{\mathds{S}^n}(\ell\cdot \zeta)^2\,\mu_0(d\zeta)\\
    &= f(|\xi|) \int_{\mathds{S}^n}(\ell\cdot \zeta)^2\,\mu_0(d\zeta).
\end{align*}
 On the other hand, the last line of \eqref{set-e39} reads
\begin{align*}
    q^L(|\xi|\ell)
    &= \int_{\mathds{S}^n} f(|\xi| |\ell\cdot \zeta|) \,\mu_0(d\zeta).
\end{align*}
As $f(r)$ is regularly varying  at infinity, there exists some $C>0$, such that  $f(cr)\leq C f(r)$ for any $c\in (0,1]$ and sufficiently large values of $r\gg 1$, see \cite[Theorem 1.5.6]{BGT}.   Therefore, we get
$$
    q^L(|\xi|\ell)\leq C \mu_0(\mathds{S}^n)f(|\xi|),\quad |\xi|\gg 1.
$$
Observe also, that the function $q^U$ is differentiable almost everywhere, and  the derivative with respect to the radial component equals
\begin{equation*}
    \frac{\partial}{\partial r} q^U(r \ell ) = \frac{2}{r} q^L(r\ell).
\end{equation*}
for any $\ell \in \mathds{S}^n$ and $r>0$. Therefore, we deduce with our previous calculations that
$$
    1\asymp
    \lim_{r\to \infty } \frac{q^L(\lambda r\ell)}{q^L(r\ell)}
    \asymp
    \lim_{r\to \infty } \frac{q^U(\lambda r\ell)}{q^U(r\ell)};
$$
for the second equivalence relation we use l'Hospital's rule. Thus, condition \eqref{set-e30} holds true.

\item Any L\'evy process from the previous example, which is  perturbed with a non-constant drift $a(x)$ and such that $q(\xi) \geq c\,|\xi|^{1+\epsilon}$ for some $\epsilon>0$;
\item (Weak) solutions  to SDEs driven by symmetric $\alpha$-stable L\'evy noise $(1<\alpha <2)$ and H\"older continuous coefficients, see \cite{KK14a} for the existence of such weak solutions, as well as for a simplified version of the parametrix method.
\item Stable-type processes (in the sense of Z.-Q.\ Chen and T.\ Kumagai) where $m(x,h)$ is jointly continuous, bounded and bounded away from $0$ and $\mu(dh) = |h|^{-\alpha-d}\,dh$.
\end{itemize}
In general, the main  problem  is to show that \eqref{set-e30} holds true, which is a condition on the L\'evy measure. To wit, this condition holds true for the ``discretized version'' of an $\alpha$-stable L\'evy measure in $\rn$:
$$
    \mu(dh)=\sum_{k=-\infty}^\infty 2^{k\gamma}m_{k,v}(dh), \quad 0<\gamma<2\upsilon,
$$
where  $m_{k,\upsilon}(dh)$ is  the uniform distribution  on the sphere $\mathbb{S}_{k,\upsilon}$ centered at $0$ with radius $2^{-k\upsilon}$, $\upsilon>0$, $k\in \mathbb{Z}$,   $\upsilon>0$, $0<\gamma<2\upsilon$, see \cite{K13}. In this example $q^U(\xi)\asymp q(\xi)\asymp q^L(\xi)\asymp |\xi|^\alpha$, where  $\alpha=\gamma/\upsilon \in (0,2)$; see \cite{KK12a} for further examples in this direction. On the other hand, L\'evy measures of the form $\sum_{k=0}^\infty a_k\delta_{h_k}$, $a_k, h_k >0$ for rapidly growing weights $a_k\to\infty$ and $h_k\to 0$ are exactly those measures which create oscillations in the symbol $q$, making \eqref{set-e30} impossible, see \cite[Example 1.1.15]{fjs-2}.

In Section~\ref{rec} we consider further examples of processes which satisfy Assumption~A and are recurrent (which is needed in the second main result of our paper).

\normal

\medskip
In order to  state our  result on the bound for the Hausdorff dimension of level sets we need to define two auxiliary indices.
Recall that a set $D$ is called a $d$-set, if  there exists a measure $\varpi\in \mathcal{M}_b^+(\overline{D})$, $\supp\varpi = \overline{D}$,  such that
\begin{equation}\label{set-e45}
    c_1 r^d \leq \varpi\big(B(x,r)\cap \overline{D} \big) \leq c_2 r^d, \quad  x\in \overline{D},\; r>0;
\end{equation}
the corresponding measure $\varpi$ is called a $d$-measure.
Denote by  $\mathcal{M}_b^+(\overline{D})$ the family of all finite Borel measures with support in  $\overline{D}\subset \rn$. For a $d$-set $D$ we define
\begin{align}
\label{set-e46}
    \gamma_{\inf}
    &:= \inf\left\{ \gamma\in [0,1]\,:\,\int_0^1  \frac{\varpi\big(B(x,r)\big)}{(q^*)^{\gamma}(1/r)}\,\frac{dr}{r^{n+1}} < \infty, \,\,\text{for a $d$-measure $\varpi$ on $D$}\right\}\\
    &=\nonumber
    \inf\left\{\gamma\in [0,1]\,:\,\int_0^1  \frac{r^d}{(q^*)^{\gamma}(1/r)}\,\frac{dr}{r^{n+1}} < \infty\right\},\\
\label{set-e47}
    \gamma_{\sup}
    &:= \sup\left\{\gamma \in [0,1]\,:\, x\mapsto \int_0^1 \frac{\varpi\big(B(x,r)\cap \overline{D}\big)}{(q^*)^{\gamma}(1/r)}\,\frac{dr}{r^{n+1}} \;\;\text{is unbounded}\;\;\forall \varpi\in \mathcal{M}_b^+(\overline{D})\right\}.
\end{align}

Let us give an intuitive explanation of the meaning of the indices $\gamma_{\inf}$ and $\gamma_{\sup}$.
Denote by $T^{\gamma} = (T_t^{\gamma})_{t\geq 0}$, $\gamma\in (0,1)$, a $\gamma$-stable subordinator, i.e.\ a real-valued L\'evy process with increasing sample paths such that $t^{-1/\gamma} T^{\gamma}_t=T^{\gamma}_1$ in distribution for all $t>0$. Assume that $T^{\gamma}$ is independent of $X$. Intuitively, $\gamma_{\inf}$ is the smallest $\gamma$ for which the time-changed process $X_{T^\gamma_t}$ still can see the set $\overline{D}$,  and $\gamma_{\sup}$ is the largest $\gamma$, for which $\overline{D}$ is polar for $X_{T^\gamma_t}$.

We can now state our first main result.
\begin{theorem}\label{H-bounds}
    Suppose that the Feller process $X$ with generator $\mathcal{L}$ satisfies  Assumption~A, and $D=\overline D\subset \rn$ is a closed $d$-set with $d>n-\alpha$. If $x\in D$, then\footnote{Here, as well as in the rest of the paper, ``$\dim$'' stands for the Hausdorff dimension.}
    \begin{equation}\label{set-e50}
        1-\gamma_{\inf}
        \leq \dim\{s\,:\, X_s^x \in D\}
        \leq 1-\gamma_{\sup}, \quad \Pp^x\text{-a.s.}
    \end{equation}
    where $\gamma_{\inf}$ and $\gamma_{\sup}$ are given by \eqref{set-e46} and \eqref{set-e47}, respectively.
\end{theorem}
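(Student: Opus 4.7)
The plan is to reduce the dimension problem to a hitting/polarity question for an independently subordinated process. I would introduce a $\gamma$-stable subordinator $T^\gamma$ independent of $X$, form $Y^\gamma_t := X_{T_t^\gamma}$, and observe that the range $R^\gamma := \{T^\gamma_t:t\geq 0\}$ satisfies
$$
    R^\gamma \cap \{s:X^x_s\in D\}\neq \emptyset
    \iff Y^\gamma \text{ hits } D.
$$
Hawkes' Lemma~\ref{ha} then converts this into a statement about the random closed set $A(\omega):=\{s: X^x_s(\omega)\in D\}$: $\dim A\leq 1-\gamma$ whenever $R^\gamma$ almost surely misses $A$, and $\dim A\geq 1-\gamma$ whenever $R^\gamma$ hits $A$ with positive probability. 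Thus the whole task reduces to identifying the $\gamma$'s for which $D$ is polar, respectively non-polar, for $Y^\gamma$.

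Next I would derive two-sided bounds on the transition density $p^\gamma_t(x,y)=\int_0^\infty p_s(x,y) f_t^\gamma(s)\,ds$ of $Y^\gamma$ from \eqref{set-e35}--\eqref{set-e36}, and integrate in $t$ to obtain a bound on the Green kernel $G^\gamma(x,y)$. The self-similarity of $T^\gamma$ combined with the compound kernel structure should yield $G^\gamma(x,y)\asymp \Phi_\gamma(|x-y|)$, where $\Phi_\gamma(r)$ behaves, up to multiplicative constants, like a primitive of $(q^*)^{-\gamma}(1/r)\,r^{-n-1}$. Integrating $G^\gamma(x,y)$ against a test measure $\varpi\in\mathcal{M}_b^+(\overline D)$ and using Fubini together with the ``shell'' decomposition $\overline D\cap(B(x,r)\setminus B(x,r/2))$ then produces, up to constants, precisely the energy integral appearing in \eqref{set-e42}; specialising to the $d$-measure from \eqref{set-e45} recovers the integral in \eqref{set-e40}.

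With these density/energy estimates I would invoke the potential-theoretic framework of Section~\ref{pot}. For the upper bound, if $\gamma<\gamma_{\sup}$ the $\gamma$-energy is infinite for every $\varpi\in\mathcal{M}_b^+(\overline D)$; by an Evans--Frostman-type criterion valid under the assumed density bounds, this forces the $\gamma$-capacity of $D$ to vanish and so $D$ is polar for $Y^\gamma$. Letting $\gamma\uparrow\gamma_{\sup}$ gives $\dim A\leq 1-\gamma_{\sup}$ $\Pp^x$-a.s. For the lower bound, if $\gamma>\gamma_{\inf}$ then the $d$-measure of $D$ has finite $\gamma$-energy (here the hypothesis $d>n-\alpha$ enters via \eqref{set-e33} to ensure convergence of the relevant integral near $r=0$), so $D$ has positive $\gamma$-capacity and is non-polar for $Y^\gamma$; the Kato-class/regularity characterisation of Section~\ref{pot}, applied at the boundary points of the $d$-set, upgrades this to $\Pp^x(T^\gamma_D<\infty)>0$ for every $x\in D$. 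Letting $\gamma\downarrow\gamma_{\inf}$ and passing from ``positive probability'' to ``almost surely'' via a Blumenthal-type $0/1$-law (or the tail triviality of $R^\gamma$) gives $\dim A\geq 1-\gamma_{\inf}$.

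The main obstacle I anticipate is the derivation of the clean two-sided bound on $G^\gamma$: the upper estimate \eqref{set-e36} is not a pure heat-kernel bound but a convolution with the sub-probability measures $Q_t$, so controlling $\int_0^\infty p_s(x,y)f^\gamma_t(s)\,ds$ uniformly requires handling $Q_s$ across the relevant scale $s\asymp t^{1/\gamma}$. A second delicate point is the upgrade from ``$D$ is hit with positive probability'' to ``$D$ is entered in arbitrarily short time, $\Pp^x$-a.s.\ for every $x\in \overline D$'', which is exactly what promotes both dimension bounds from ``in probability'' to ``almost sure''; the condition $d>n-\alpha$ together with the Kato-class regularity criterion of Section~\ref{pot} should, I expect, provide the needed regularity of every $x\in \overline D$ for $D$.
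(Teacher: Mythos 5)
Your overall strategy is exactly the paper's: subordinate by an independent $\gamma$-stable subordinator, translate ``$T^{\gamma}$ hits the level set'' into ``$X_{T^{\gamma}}$ hits $D$'', and use Hawkes' Lemma~\ref{ha} together with the Kato-class/regularity machinery of Section~\ref{pot} (for the lower bound) and the unbounded-potential criterion for polarity (for the upper bound). There is, however, one step as written that would fail, and it sits at the crux of why the lower bound is \emph{almost sure}. Knowing only that $D$ is non-polar for $Y^{\gamma}=X_{T^{\gamma}}$, i.e.\ $\Pp^x\otimes\Pp^*(\tau^{\gamma}_D<\infty)>0$, Fubini gives merely that $\Pp^*\bigl(T^{\gamma}\text{ hits }\{s:X_s(\omega)\in D\}\bigr)>0$ for a set of $\omega$ of \emph{positive} $\Pp^x$-measure, hence $\dim\{s:X_s\in D\}\geq 1-\gamma$ only with positive probability. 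Neither a Blumenthal-type $0/1$-law nor tail triviality of the range $R^{\gamma}$ repairs this: $\{\tau^{\gamma}_D<\infty\}$ is not a germ event, and the $\omega$-dependence through $A(\omega)$ is not a tail event for $T^{\gamma}$. The paper's fix is stronger input, not a $0/1$-law: since $\gamma>\gamma_{\inf}$ and $\varpi(B(x,r))\leq c r^d$, Lemma~\ref{lem-SK}\,a) gives $\varpi\in\Kato$ with respect to the \emph{subordinate} kernel $p^{(\gamma)}_t$, and Lemma~\ref{lem-dmeas} then makes every point of $D$ (in particular the starting point $x$) regular for $D$ under $Y^{\gamma}$; hence $\Pp^x\otimes\Pp^*(\tau^{\gamma}_D=0)=1$, the set $W$ in \eqref{s4-e30} has \emph{full} product measure, and the $\omega$-wise application of Hawkes' lemma yields the a.s.\ bound directly. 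You in fact name this regularity mechanism in your last paragraph, so the slip is repairable, but the intermediate claim ``non-polarity $+$ $0/1$-law $\Rightarrow$ a.s.'' should be replaced by it.

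A secondary, more presentational point: clean two-sided pointwise bounds $G^{\gamma}(x,y)\asymp\Phi_{\gamma}(|x-y|)$ for the Green kernel are neither available in this generality (the upper estimate \eqref{set-e36} carries the $Q_t$-convolution, the lower estimate \eqref{set-e35} is only effective near the diagonal, and $Y^{\gamma}$ need not be transient, so $G^{\gamma}$ may be infinite) nor needed. The paper works with the truncated potential $\int_0^T\!\int_D p^{(\gamma)}_t(x,y)\,\varpi(dy)\,dt$: the upper bound (Lemma~\ref{lem-SK}\,a)) yields the Kato property used for regularity, and the lower bound \eqref{app-e60} shows that for $\gamma<\gamma_{\sup}$ the $0$-potential $R_0\varpi$ is unbounded for every finite measure supported in $D$, whence $D$ is polar for $Y^{\gamma}$ by the criterion recalled in Remark~\ref{equiv}; this is exactly the ``Evans--Frostman-type'' step you invoke, and your anticipated difficulty with the $Q_t$-convolution across the scale $s\asymp t^{1/\gamma}$ is precisely the technical content of the appendix proof of Lemma~\ref{lem-SK}.
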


In the one-dimensional case we can get a result which closely resembles those in \cite{Ha74} for L\'evy processes.  Denote by
$$
    X^{-1}(\{0\},\omega):= \{ s>0\,:\, X_s(\omega)=0\}, \quad \text{where $X_0(\omega)=0$},
$$
the zero-level set of $X$ and set
\begin{equation*}
    \gamma^* := \inf\left\{ \gamma\in [0,1]\,:\,\int_0^1 \frac{1}{(q^*(1/s))^\gamma}\,\frac{ds}{s^2} <\infty \right\}.
\end{equation*}
The corollary below follows from Theorem~\ref{H-bounds} if we take $D=\{0\}$,  $d=0$ and $\alpha>1$; in this case points are non-polar for $X$.
\begin{corollary}\label{cor-H}
    Let $X$ be a Feller process with generator $\mathcal{L}$ and suppose that Assumption~A is satisfied. Let $n=1$ and $\alpha>1$.  Then
    \begin{equation*}
        \dim X^{-1}(\{0\},\omega) = 1-\gamma^* \quad \Pp^0\text{-a.s.}
    \end{equation*}
    In particular, if $q^*(\xi)\asymp |\xi|^\alpha$ ($|\xi|\geq 1$), then $\gamma^*= 1/\alpha$.
\end{corollary}

Corollary~\ref{cor-H} can also be used to calculate the Hausdorff dimension of the set of collision times of independent copies $X^1$, $X^2$ of $X$:
\begin{equation*}
    \Theta(\omega)
    := \left\{ t\geq 0\,:\, X^1_t =X^2_t=x\quad \text{for some $x\in \rn$}\right\}.
\end{equation*}
\begin{proposition}\label{prop-H2}
    Suppose that $X$ is a one-dimensional $(n=1)$ Feller process with generator $\mathcal{L}$ and that Assumption~A is satisfied. Let $\alpha>1$, and denote by $X^{1}$ and $X^{2}$ two independent copies of $X$. Then
    \begin{equation*}
        \dim\Theta(\omega)= 1-\gamma^*  \quad \Pp\text{-a.s.}
    \end{equation*}
\end{proposition}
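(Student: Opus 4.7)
The plan is to reduce the statement to Theorem~\ref{H-bounds} by viewing $(X^1,X^2)$ as a single Feller process on $\real^2$ and identifying the collision set $\Theta$ with the set of times at which this product process hits the diagonal $\Delta:=\{(x,x):x\in\real\}$. Put $\widetilde X_t:=(X^1_t,X^2_t)$. Since $X^1$ and $X^2$ are independent copies of $X$, the process $\widetilde X$ is Feller on $\real^2$ with generator $\widetilde L=L\otimes\id+\id\otimes L$ and reference symbol $\widetilde q(\xi_1,\xi_2)=q(\xi_1)+q(\xi_2)$; consequently $\widetilde q^U(\xi)=q^U(\xi_1)+q^U(\xi_2)$ and $\widetilde q^L(\xi)=q^L(\xi_1)+q^L(\xi_2)$.

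First I would verify Assumption~A for $\widetilde X$. Parts~2)--4) follow immediately from the corresponding properties of $X$; only the comparison~1) requires work. Since $r\mapsto q^U(r\ell)$ is monotone in $r$, one has $\widetilde q^*(r)\leq 2q^*(r)$. On the other hand, every $\ell\in\mathds{S}^2$ has some coordinate $\ell_i$ with $|\ell_i|\geq 1/\sqrt 2$, so that $\widetilde q^L(r\ell)\geq q^L(r|\ell_i|)$. Now \eqref{set-e30} forces $q^L\asymp q^*$ in one dimension, and $q^*$ is monotone and doubling; hence $q^L$ is almost increasing and doubling, which gives $\widetilde q^L(r\ell)\geq c\,q^*(r)$ uniformly in $\ell$ for some $c>0$. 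This proves \eqref{set-e30} for $\widetilde q$ and in particular $\widetilde q^*\asymp q^*$.

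Next I would apply Theorem~\ref{H-bounds} to $\widetilde X$ and the diagonal $\Delta$. The push-forward of one-dimensional Lebesgue measure under $x\mapsto(x,x)$ is a $1$-measure on $\Delta$, so $\Delta$ is a closed $1$-set; with $n=2$ and $d=1$ the condition $d>n-\alpha$ reduces to the hypothesis $\alpha>1$. Starting $\widetilde X$ at any $\tilde x\in\Delta$ (i.e.\ $X^1_0=X^2_0$), the theorem gives
\begin{equation*}
    1-\widetilde\gamma_{\inf}\leq \dim\Theta(\omega)\leq 1-\widetilde\gamma_{\sup}\quad\Pp\text{-a.s.},
\end{equation*}
where the indices are computed from \eqref{set-e40}, \eqref{set-e42} for the pair $(\widetilde q^*,\Delta)$. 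For $\tilde x\in\Delta$ one has $\varpi(B(\tilde x,r)\cap\Delta)\asymp r$, so both defining integrals collapse to $\int_0^1(\widetilde q^*(1/r))^{-\gamma}\,r^{-2}\,dr$; together with $\widetilde q^*\asymp q^*$ this yields $\widetilde\gamma_{\inf}=\widetilde\gamma_{\sup}=\gamma^*$, and the conclusion follows.

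The main obstacle is verifying \eqref{set-e30} for the two-dimensional reference symbol $\widetilde q$: the infimum over $\ell\in\mathds{S}^2$ probes directions close to the coordinate axes where one component of $r\ell$ is small and contributes negligibly to $\widetilde q^L$, and handling this relies on the fact that in one dimension $q^L$ and $q^*$ are comparable rather than on any intrinsic monotonicity of $q^L$.
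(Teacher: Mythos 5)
Your route is genuinely different from the paper's. The paper forms the \emph{one-dimensional difference} process $\tilde X_t=X^1_t-X^2_t$, checks by explicit convolution estimates that its transition density again satisfies \eqref{set-e35}--\eqref{set-e36}, and then applies Corollary~\ref{cor-H} with $D=\{0\}$; you instead form the \emph{two-dimensional product} process and apply Theorem~\ref{H-bounds} with $D$ the diagonal, $d=1$, $n=2$. Your verification of \eqref{set-e30} for $\widetilde q$ is essentially sound (in $n=1$ the condition reads $q^U\leq\kappa q^L$ on $[1,\infty)$, and $q^U(r)\leq 2q^U(r/\sqrt2)$ holds automatically), and the computation of $\widetilde\gamma_{\inf}=\gamma^*$ is correct since \eqref{set-e40} involves only $r^d$ and $\widetilde q^*\asymp q^*$. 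Two caveats, though: the comparison constant $\tilde\kappa$ you obtain for $\widetilde q$ may exceed $\kappa$, so the formal index $\tilde\alpha=2/\tilde\kappa$ of the product process can drop below $1$ and Assumption~A(4) as well as the literal hypothesis ``$d>n-\alpha$'' of Theorem~\ref{H-bounds} need not hold for $\widetilde X$; and the product L\'evy kernel lives on the coordinate axes, so one should not re-invoke the construction behind Assumption~A but rather observe directly that $p_t(x_1,y_1)p_t(x_2,y_2)$ satisfies \eqref{set-e35}--\eqref{set-e36} with $n=2$ and the same $\rho_t$ (exactly the computation the paper does for $\tilde X$), which is all that the proof of Theorem~\ref{H-bounds} actually uses; the integral $\int_0^1 r^{d}(q^*(1/r))^{-1}r^{-n-1}dr$ still converges because $\widetilde q^*(r)\geq cr^\alpha$ with the \emph{original} $\alpha>1$.

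The genuine gap is in the identification $\widetilde\gamma_{\sup}=\gamma^*$. The definition \eqref{set-e42} quantifies over \emph{all} $\varpi\in\mathcal{M}_b^+(\overline D)$, and your assertion that $\varpi(B(\tilde x,r)\cap\Delta)\asymp r$ holds only for the canonical $1$-measure on the diagonal; a general finite measure on $\Delta$ (e.g.\ one carried by a Cantor subset) satisfies no such lower bound. Checking the canonical measure only yields $\widetilde\gamma_{\sup}\leq\gamma^*$, which is the wrong direction: the upper dimension bound $\dim\Theta\leq 1-\gamma^*$ requires $\widetilde\gamma_{\sup}\geq\gamma^*$, i.e.\ that for every $\gamma<\gamma^*$ and \emph{every} finite $\varpi\neq 0$ on (a compact piece of) $\Delta$ the map $x\mapsto\int_0^1\varpi(B(x,r)\cap\Delta)(q^*)^{-\gamma}(1/r)r^{-3}\,dr$ is unbounded. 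This can be repaired by an averaging argument: integrating this expression over $x$ in a compact piece of the diagonal against arc length and using Fubini gives a lower bound $c\,\varpi(\Delta')\int_0^1(q^*)^{-\gamma}(1/r)r^{-2}\,dr=\infty$ for $\gamma<\gamma^*$, so the supremum over $x$ must be infinite --- but this step is missing and is precisely where the one-dimensional geometry of $\Delta$ enters. Note that the paper's reduction to $n=1$, $D=\{0\}$ avoids the issue entirely, since $\mathcal{M}_b^+(\{0\})$ contains only multiples of $\delta_0$. (A further small repair: $\Delta$ is unbounded, so no finite measure makes it a $1$-set globally; work with $\Delta\cap\overline{B(0,R)}$ and use countable stability of Hausdorff dimension.)
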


Our second main result concerns the Hausdorff dimension of the collision set
\begin{align}\label{set-e65}
    A(\omega)
    &:= \left\{x\in \real\,:\, X^1_t(\omega) = X_t^2(\omega) = x \quad \text{for some $ t>0$}\right\}\\
    \notag&\qquad (X^1, X^2\text{\ are two independent copies of $X$}).
\end{align}

\begin{theorem}\label{t-col}
    Let $X$ be a one-dimensional $(n=1)$ Feller process with generator $\mathcal{L}$ and suppose that Assumption~A is holds. If $X$ is recurrent and if the function $q(\xi)$ from \eqref{set-e20} satisfies
    \begin{equation}\label{set-e70}
        c_1|\xi|^\alpha \leq  q(\xi)\leq c_2 |\xi|^\beta\quad \text{for all $|\xi|\geq 1$},
    \end{equation}
    for some constants $c_1,c_2>0$ and $1<\alpha\leq \beta<2$, then the Hausdorff dimension of the collision set $A(\omega)$ is estimated from above and below as
    \begin{equation*}
        \alpha-1
        \leq  \dim  A(\omega)
        \leq \beta-1 \quad \Pp^x\text{-a.s.\ for all $x\in\real$.}
    \end{equation*}
\end{theorem}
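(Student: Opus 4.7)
The strategy is to translate the geometric question into a potential-theoretic one for the two-dimensional Markov process $Z_t:=(X^1_t,X^2_t)$ and gauge the result against symmetric $\alpha$- and $\beta$-stable processes, following \cite{Ta66,JP69} in spirit but replacing their L\'evy-process input with the density bounds \eqref{set-e35}--\eqref{set-e36} and the potential-theoretic apparatus of Section~\ref{pot}. First I would recast
\begin{equation*}
A(\omega) = X^1(E(\omega)), \qquad E(\omega):=\{t>0\,:\,X^1_t(\omega)=X^2_t(\omega)\},
\end{equation*}
so that $x\in A$ if and only if $Z$ visits the diagonal point $(x,x)\in\real^2$. Since $X^1,X^2$ are independent, the transition density of $Z$ factorizes and inherits matching two-sided bounds from \eqref{set-e35}--\eqref{set-e36}; combined with \eqref{set-e70} these yield
\begin{equation*}
c_1|z_1-z_2|^{\beta-2}\leq G^Z(z_1,z_2)\leq c_2|z_1-z_2|^{\alpha-2}
\end{equation*}
for $|z_1-z_2|$ small, the upper bound using $q\geq c_1|\xi|^\alpha$ and the lower bound using $q\leq c_2|\xi|^\beta$. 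These are the two quantitative inputs for the two bounds.

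For the upper bound $\dim A\leq\beta-1$, I would partition a compact interval $I\subset\real$ into $N_r\asymp|I|/r$ sub-intervals $I_k$ of length $r$ and note that $\{A\cap I_k\neq\emptyset\}$ is exactly the event that $Z$ visits the $r$-tube $D_k$ around the diagonal segment $\{(x,x):x\in I_k\}\subset\real^2$. A standard Green-function / hitting-time inequality combined with the sandwich above (with $|D_k|\asymp r^2$ and $\inf_{y\in D_k}G^Z(y,D_k)\geq cr^\beta$) produces $\Pp^x(A\cap I_k\neq\emptyset)\leq Cr^{2-\beta}$ uniformly away from a bounded number of sub-intervals near $x$, so $\Ee\#\{k:A\cap I_k\neq\emptyset\}\leq C|I|r^{1-\beta}$, hence $\mathcal H^s_r(A\cap I)\leq C|I|r^{s+1-\beta}$. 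A Borel--Cantelli argument along a dyadic scale gives $\mathcal H^s(A\cap I)=0$ a.s.\ for every $s>\beta-1$.

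For the lower bound $\dim A\geq\alpha-1$, I would use a Frostman-type construction. Since $\alpha>1$, the process $X^1-X^2$ admits a continuous local time $L^0$ at the origin, supported on $E$, and the lower density bound \eqref{set-e35} together with recurrence forces $\Pp^x(L^0_\infty>0)>0$. Push it forward by $X^1$ to obtain the random measure $\mu:=(X^1)_*\,dL^0$ on $A$ and estimate, for $s<\alpha-1$,
\begin{equation*}
\Ee\iint\frac{\mu(du)\,\mu(dv)}{|u-v|^s}=\Ee\iint_{E\times E}\frac{dL^0_t\,dL^0_{t'}}{|X^1_t-X^1_{t'}|^s}.
\end{equation*}
By the Markov property, the expectation under the double integral disentangles into the Revuz measure of $L^0$ and the two-point density of $(X^1_t,X^1_{t'})$; bounding the latter via \eqref{set-e36} and the estimate $\rho_{t'-t}\leq c(t'-t)^{-1/\alpha}$ (which comes from $q\geq c_1|\xi|^\alpha$) yields $\Ee|X^1_t-X^1_{t'}|^{-s}\leq C|t-t'|^{-s/\alpha}$, and a short calculation shows the double integral is finite for every $s<\alpha-1$. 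Frostman's lemma then gives $\dim A\geq\alpha-1$ on $\{\mu\not\equiv 0\}$, and a Blumenthal $0$--$1$ argument upgrades this to the stated a.s.\ bound for every starting point $x$.

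The main obstacle will be the lower bound: absent independent increments, $X^1_t$ and $X^1_{t'}$ cannot be decoupled on $E$, so the key ingredient $\Ee|X^1_t-X^1_{t'}|^{-s}\leq C|t-t'|^{-s/\alpha}$ must be extracted from \eqref{set-e35}--\eqref{set-e36} via the Kato-class comparison with a symmetric $\alpha$-stable process developed in Section~\ref{pot}, and the short-time restriction $t\in(0,1]$ in \eqref{set-e36} must be overcome by combining a localization argument with the recurrence hypothesis to extend the computation over all relevant times.
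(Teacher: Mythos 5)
Your covering argument for the upper bound hinges on two-sided bounds for the Green function $G^Z$ of the pair $(X^1,X^2)$, in particular on its \emph{finiteness} off the diagonal, so that $\int_{D_k}G^Z(x,y)\,dy\le C r^2$ and hence $\Pp^x(A\cap I_k\neq\emptyset)\le Cr^{2-\beta}$. But Assumption A gives the bounds \eqref{set-e35}--\eqref{set-e36} only for $t\in(0,1]$; nothing in the hypotheses controls $\int_1^\infty\mathfrak{p}_t(x,y)\,dt$, and transience of the pair process is never established (each coordinate is even assumed recurrent). Since $A(\omega)$ collects collisions over the whole infinite time horizon, without an upper Green bound the hitting probability of an $r$-tube cannot be bounded by $Cr^{2-\beta}$; if the pair were neighbourhood recurrent that probability would be $1$ and the first-moment count would give nothing. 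This is exactly what the paper's proof is built to avoid: the upper bound is obtained by showing that the range of an independent $\theta$-stable gauge process with $\theta<2-\beta$ is polar for $(X^1,X^2)$ (Lemma~\ref{t.3.1}), which uses only the short-time lower bound $\int_0^1\mathfrak{p}_t(x,y)\,dt\ge c|x-y|^{\beta-2}$ together with the characterization of polarity through unboundedness of $R_0\varpi$ (Remark~\ref{equiv}); Taylor's theorem (Lemma~\ref{l.4.2}) then converts polarity for all such $\theta$ into $\dim A\le\beta-1$. Your claimed sandwich $c_1|z_1-z_2|^{\beta-2}\le G^Z(z_1,z_2)\le c_2|z_1-z_2|^{\alpha-2}$ is therefore unsupported by the hypotheses, and the upper-bound half of your proof does not go through as written.

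Your lower bound is a genuinely different (local-time energy/Frostman) route from the paper's, and the exponent arithmetic $s<\alpha-1$ is correct, but key ingredients are missing. The collision local time must be constructed as a PCAF of the possibly non-symmetric pair process, and the ``disentangling'' of the double integral requires a second-moment (Revuz-type) formula that is not available off the shelf in this generality; the paper only ever needs first-moment Kato-class estimates, applied to a Frostman measure on the range of a $\theta$-stable gauge with $\theta\in(2-\alpha,2)$ and Corollary~\ref{poten}. The decoupling bound $\Ee|X^1_t-X^1_{t'}|^{-s}\le C|t-t'|^{-s/\alpha}$ is available from \eqref{set-e36} only for $|t-t'|\le 1$, so the energy computation must be localized in time. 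Finally, the passage to an a.s.\ statement for \emph{every} starting point is not a Blumenthal $0$--$1$ argument: $\{\dim A\ge\alpha-1\}$ is not a germ event, and the copies need not start at a common collision point. The paper instead uses lower semicontinuity of excessive functions (strong Feller property) to obtain a uniform $\delta>0$ on an open set of diagonal starting points, and then point recurrence plus a geometric-trials iteration along the stopping times $\tau_n$ (yielding the bound $(1-\delta)^n$) to upgrade positive probability to probability one. These steps would have to be supplied before your lower bound is complete.
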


\section{Some auxiliary results from potential theory}\label{pot}

A central problem is which points can be hit by the process $X$. For this we need a few tools from potential theory. The following definition is taken from \cite{KT07}.

\begin{definition}\label{Kato}
    Let $(X_t)_{t\geq 0}$ be an $\rn$-valued Markov process admitting a transition density $p_t(x,y)$ and $\varpi$ a Borel measure on $\rn$. The measure $\varpi$ belongs to the \emph{Kato class} $\Kato$ with respect to $p_t(x,y)$,  if
    \begin{equation}\label{pot-e05}
        \lim_{t\to 0} \sup_{x\in \rn} \int_0^t \int_\rn p_s(x,y)\,\varpi(dy)\,ds = 0.
    \end{equation}
\end{definition}

Let $r_\lambda(x,y)$, $\lambda>0$,  be the $\lambda$-potential density of $X$, i.e.
$$
    r_\lambda(x,y) := \int_0^\infty e^{-\lambda s} p_s(x,y)\, ds.
$$
We can extend the resolvent operator from functions $f\in L_1(\rn)$ to (finite) measures: For $\lambda>0$ and any  finite  measure $\varpi$ we can define the operator
\begin{equation*}
    R_\lambda \varpi(x)
    := \int_0^\infty \int_\rn e^{-\lambda s} p_s(x,y)\,\varpi(dy)\,ds
    = \int_\rn r_\lambda(x,y)\,\varpi(dy).
\end{equation*}
A Borel set $D\subset\rn$ is \emph{polar} for $X=(X_t)_{t\geq 0}$, if $\Pp^x(\tau_D<\infty)=0$ for all $x\in \rn$, where
\begin{equation*}
    \tau_D := \inf\{t>0\,:\, X_t \in D\}
\end{equation*}
is the first hitting time of the set $D$.

\begin{remark}\label{equiv}
It is shown in \cite{KT06} that \eqref{pot-e05}
is equivalent to ``$\lim_{\lambda\to \infty}  \sup_x R_\lambda \varpi(x)=0$''.
The set $D$ is polar if and only if $R_0\varpi(x)$ is unbounded for any 
finite non-zero measure $\varpi$ with compact support contained in $\overline{D}$, see \cite[p.~285]{BG68}.
\end{remark}

In order to make sure that the process $X$ enters the set $D$, we need to take the starting point $x$ from the \emph{fine closure} (i.e.\ the closure in the fine topology) of $D$. Recall from \cite[p.~87, Exercise 4.9]{BG68} that the fine closure $\widetilde D$ of a set $D$ is $D\cup D^r$, where $D^r$ denotes the set of \emph{regular points} of $D$, i.e.
$$
    D^r := \left\{x\in \rn\,:\,\Pp^x(\tau_D=0)=1\right\}.
$$
We need to characterize the regular points for $D$. The following elementary result \emph{should} be known, but we could not find a reference and so we include the short proof.
\begin{lemma}\label{lem-reg}
    Let $D\subset \rn$ and assume that there exists a finite measure $\varpi\in\Kato$ (w.r.t.\ $p_t(x,y)$) with $\supp\varpi=\overline{D}$. If a point $x\in\rn$ satisfies
    \begin{equation}\label{pot-e15}
        \liminf_{\lambda\to \infty} \frac{R_\lambda \varpi(x)}{\sup_{y\in \overline{D}} R_\lambda \varpi(y)}
        = c(x) > 0,
    \end{equation}
    then $x$ is regular for $D$. In particular, if a point $x$ is not regular for $D$, then the constant $c(x)$ in \eqref{pot-e15} is necessarily equal to $0$.
\end{lemma}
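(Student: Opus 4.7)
The strategy is the contrapositive: assume $x$ is \emph{not} regular for $D$, i.e.\ $\Pp^x(\tau_D=0)=0$, which by Blumenthal's 0--1 law forces $\tau_D>0$ $\Pp^x$-almost surely. Under this assumption I would deduce that the ratio in \eqref{pot-e15} tends to $0$, so $c(x)=0$, contradicting the hypothesis $c(x)>0$; the ``in particular'' statement is then immediate.

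The main tool is the identity
\begin{equation*}
    R_\lambda\varpi(x)
    =\Ee^x\bigl[e^{-\lambda\tau_D}\,R_\lambda\varpi(X_{\tau_D})\,\I_{\{\tau_D<\infty\}}\bigr],
\end{equation*}
which expresses that, modulo exponential discounting, $R_\lambda\varpi$ is $\lambda$-harmonic off $\overline{D}=\supp\varpi$. I would derive it from the additive-functional representation $R_\lambda\varpi(x)=\Ee^x\bigl[\int_0^\infty e^{-\lambda s}\,dA_s\bigr]$, where $A$ is the continuous additive functional of $X$ with Revuz measure $\varpi$; the condition $\varpi\in\Kato$ is precisely what guarantees that $A$ is finite and that $R_\lambda\varpi$ is bounded. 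Since $A$ charges only $\{s:X_s\in\overline{D}\}$, the integral over $[0,\tau_D)$ vanishes on $\{\tau_D>0\}$, and the strong Markov property at $\tau_D$ gives the displayed identity. Alternatively, one may mollify $\varpi$ into absolutely continuous approximants $\varpi_n(dy)=g_n(y)\,dy$ supported in a shrinking neighbourhood of $\overline{D}$, apply strong Markov at $\tau_{\overline{D}^{1/n}}$ to the smooth potentials $R_\lambda\varpi_n$, and pass to the limit using continuity of $r_\lambda(x,y)$.

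With the identity in hand, the bound $R_\lambda\varpi(X_{\tau_D})\leq\sup_{y\in\overline{D}} R_\lambda\varpi(y)$ (valid since $X_{\tau_D}\in\overline{D}$ by right-continuity of paths) and division by the sup yield
\begin{equation*}
    \frac{R_\lambda\varpi(x)}{\sup_{y\in\overline{D}} R_\lambda\varpi(y)}
    \leq \Ee^x\bigl[e^{-\lambda\tau_D}\,\I_{\{\tau_D<\infty\}}\bigr].
\end{equation*}
Since $\tau_D>0$ $\Pp^x$-a.s., dominated convergence drives the right-hand side to $0$ as $\lambda\to\infty$, so $c(x)=0$, the required contradiction.

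The main obstacle is the justification of the balayage identity: the cleanest route goes through the Revuz correspondence between smooth measures and continuous additive functionals, which is available here because the transition density bounds \eqref{set-e35}--\eqref{set-e36} combined with $\varpi\in\Kato$ supply the regularity needed for both $R_\lambda\varpi$ and the associated additive functional. A secondary but minor point is verifying $\sup_{z\in\rn}R_\lambda\varpi(z)=\sup_{y\in\overline{D}}R_\lambda\varpi(y)$, the classical maximum principle for $\lambda$-potentials of measures supported in $\overline{D}$.
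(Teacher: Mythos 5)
Your proposal is correct and follows essentially the same route as the paper: the balayage identity $R_\lambda\varpi(x)=\Ee^x\bigl[e^{-\lambda\tau_D}R_\lambda\varpi(X_{\tau_D})\bigr]$ derived from the continuous additive functional with $\Ee^x A_t=\int_0^t\int p_s(x,y)\,\varpi(dy)\,ds$, followed by the bound $R_\lambda\varpi(x)/\sup_{y\in\overline{D}}R_\lambda\varpi(y)\leq\Ee^x e^{-\lambda\tau_D}$ and letting $\lambda\to\infty$. Your explicit appeal to Blumenthal's $0$--$1$ law to pass from $\Pp^x(\tau_D=0)>0$ to full regularity is a step the paper leaves implicit, but the argument is otherwise identical.
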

\begin{proof}
    Let $\varpi$ be a finite measure such that $\supp\varpi= \overline{D}$ and $\varpi\in \Kato$. By \cite[Vol.~1, p.~194, Theorem~6.6]{Dy65}, there exists a continuous additive functional\footnote{that is, $A_{t+s}=A_s + A_t\circ \theta_s$ for any $t,s>0$ where $\theta_s$ is the shift operator.} $A_t$  satisfying
    $$
         \Ee^x A_t = \int_0^t \int_\rn p_s(x,y)\,\varpi(dy)\,ds.
    $$
    Using standard arguments, we find for any $\lambda>0$ and $x\in \rn$
\begin{align}\label{pot-e20}
    \Ee^x  \int_0^m e^{-\lambda t}\, dA_t
    = \int_0^m e^{-\lambda t}\, d\Ee^x A_t
    = \int_0^m e^{-\lambda t} p_t(x,y)\,\varpi(dy)\,dt.
\end{align}
    Passing to the limit as $m\to \infty$, we get
\begin{equation}\label{pot-e25}
    \Ee^x  \int_0^\infty e^{-\lambda t} \,dA_t
    = R_\lambda \varpi(x).
\end{equation}
    Let $\tau:=\tau_D$ be the hitting time of the set $D$. By construction, the additive functional $A_t$ satisfies $A_t=0$ for $t<\tau$. Thus,
\begin{align*}
    \lambda\Ee^x &\int_0^m e^{-\lambda t} A_t \,dt\\
    &= \lambda \underbrace{\Ee^x \int_0^{ m} e^{-\lambda t} A_t \I_{\{\tau > m\}}\,dt}_{=0}
       + \lambda \Ee^x \int_{\tau}^m e^{-\lambda t} A_t \I_{\{\tau\leq m\}}\,dt\\
    &= \lambda \Ee^x \int_0^{m-\tau}e^{-\lambda (t+\tau)}  A_{t+\tau} \I_{\{\tau\leq m\}} \,dt\\
    &= \Ee^x  \left[e^{-\lambda \tau} \I_{\{\tau\leq m\}} \Ee^{X_{\tau}} \left( \lambda \int_0^{m-\tau} e^{-\lambda t} A_t\,dt \right) \right]\\
    &=\Ee^x\left[  e^{-\lambda \tau}\I_{\{\tau \leq m\}}  \Ee^{X_{\tau}} \left(\int_0^{m-\tau}  e^{-\lambda  t} \,dA_t\right)\right] - e^{-\lambda  m} \,\Ee^x \left[  \I_{\{\tau \leq m\}} \Ee^{X_{\tau}} A_{m-\tau}\right]\\
    &= \Ee^x\left[  e^{-\lambda \tau}\I_{\{\tau \leq m\}}  \Ee^{X_{\tau}} \left( \int_0^{m-\tau}  e^{-\lambda  t} \,dA_t\right)\right] - e^{-\lambda m} \Ee^x A_m.
\end{align*}
    For the last step we used the continuity of $A_t$ to get $A_\tau =0$ and, by the additive property,
\begin{align*}
    \Ee^x  A_m
    =\Ee^x A_{\tau} + \Ee^x \left[A_{m-\tau}\circ \theta_{\tau}\I_{\{\tau \leq m\}}\right]
    &= \Ee^x \left[ \Ee^x \left( A_{m-\tau}\circ  \theta_{\tau} \:\middle|\: \mathcal{F}_{\tau}\right) \right]\\
    &= \Ee^x \left[\I_{\{\tau \leq m\}}\Ee^{X_{\tau}} A_{m-\tau}\right].
\end{align*}
    These calculations, when combined with \eqref{pot-e20} and integration by parts, yield
\begin{equation*}
    \Ee^x \int_0^m e^{-\lambda t}\,dA_t
    = \Ee^x\left[  e^{-\lambda \tau}\I_{\{\tau \leq m\}}  \Ee^{X_{\tau}} \left( \int_0^{m-\tau}  e^{-\lambda  t} \,dA_t\right)\right],
\end{equation*}
    and passing to the limit as $m\to \infty$ we finally arrive at
\begin{equation*}
    R_\lambda \varpi(x) = \Ee^x \left[ e^{-\lambda \tau} R_\lambda \varpi(X_{\tau})\right].
\end{equation*}
    Since $X_{\tau}\in \overline{D}$, the last equality implies
\begin{equation}\label{pot-e40}
    \frac{R_\lambda \varpi(x)}{\sup_{y\in \overline{D}} R_\lambda\varpi(y)}\leq  \Ee^x e^{-\lambda \tau}.
\end{equation}
    Note that $ \{\tau>0\}$ is a ``tail event'', i.e.\ it has probability $0$ or $1$. Taking the lower limit $\liminf_{\lambda \to \infty}$ on both sides, we get a contradiction to \eqref{pot-e15}, unless $\tau\equiv 0$. Thus, $\Pp^x(\tau>0)=0$.
\end{proof}

\begin{remark}
    For a symmetric Markov process $X$, the relation \eqref{pot-e25} is known for all   measures which have \emph{finite energy integrals}, see \cite[pp.~223--226, Theorem~5.1.1, Lemma~5.1.3]{FOT11}.
\end{remark}

It is possible to give a more explicit sufficient condition for a point $x$ to be regular for $D$; this requires further knowledge of the structure of $D$, for instance that $D$ is a $d$-set.
\begin{lemma}\label{lem-dmeas}
    Let $D\subset \rn$  be a $d$-set and assume that the corresponding $d$-measure $\varpi$ belongs to $\Kato$ w.r.t.\ $p_t(x,y)$. Then any point of $\overline{D}$  is regular for $D$, i.e.\ $\overline{D} = D \cup D^r = \widetilde D$.
\end{lemma}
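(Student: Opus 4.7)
The plan is to verify the sufficient condition \eqref{pot-e15} of Lemma~\ref{lem-reg} at every $x\in\overline{D}$: I would show that
\begin{equation*}
    R_\lambda\varpi(x) \;\ge\; c\,\sup_{y\in\overline{D}} R_\lambda\varpi(y),\qquad \lambda\ge 1,\;x\in\overline{D},
\end{equation*}
for some constant $c>0$ independent of $\lambda$. The Kato-class hypothesis is needed only at the very end to invoke Lemma~\ref{lem-reg}; the comparison itself is extracted directly from the density estimates \eqref{set-e35}--\eqref{set-e36} and the two-sided $d$-set inequality \eqref{set-e45}, with the main contribution to both sides coming from $t\to 0$ (i.e.\ large $\lambda$).

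For the lower bound at $x\in\overline{D}$, I would use \eqref{set-e35}: $f_{\textrm{low}}$ is bounded below by a positive constant on a ball of fixed radius, so $p_t(x,y)\ge c\,\rho_t^{\,n}$ whenever $|y-x|\le c'/\rho_t$. Since $x\in\overline{D}$, the lower part of \eqref{set-e45} gives $\varpi(B(x,c'/\rho_t)\cap\overline{D})\ge c_1(c'/\rho_t)^d$, and multiplying produces
\begin{equation*}
    \int_\rn p_t(x,y)\,\varpi(dy) \;\ge\; c_*\,\rho_t^{\,n-d},\qquad t\in(0,1],\;x\in\overline{D}.
\end{equation*}
For the matching upper bound uniformly in $y\in\rn$, I would first observe that \eqref{set-e45} extends to arbitrary centres: given $w\in\rn$, pick a nearest point $w^*\in\overline{D}$; then $B(w,r)\cap\overline{D}\subset B(w^*,2r)$, so $\varpi(B(w,r)\cap\overline{D})\le c_2(2r)^d$ for every $w\in\rn$ and $r>0$. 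Starting from \eqref{set-e36}, swapping the $\varpi$- and $Q_t$-integrals, and applying the layer-cake identity to $\int a_3 e^{-a_4\rho_t|z-(y+u)|}\varpi(dz)$ together with this extended bound, the inner integral is $\le C\rho_t^{-d}$ uniformly in the shift $y+u$; since $Q_t$ is a sub-probability measure, this yields
\begin{equation*}
    \int_\rn p_t(y,z)\,\varpi(dz) \;\le\; C^*\,\rho_t^{\,n-d},\qquad t\in(0,1],\;y\in\rn.
\end{equation*}

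Combining the two estimates and splitting the resolvent integral at $t=1$ (with the tail controlled by $\|p_1\|_\infty\varpi(\overline{D})e^{-\lambda}$, where the sup-norm bound on $p_1$ follows from \eqref{set-e36}), for $\lambda\ge 1$ I obtain
\begin{equation*}
    R_\lambda\varpi(x) \ge c_*\!\int_0^1\! e^{-\lambda t}\rho_t^{\,n-d}\,dt,\qquad
    \sup_{y\in\rn} R_\lambda\varpi(y) \le C^*\!\int_0^1\! e^{-\lambda t}\rho_t^{\,n-d}\,dt + C\,e^{-\lambda}.
\end{equation*}
Since $d\le n$ and $\rho_t\to\infty$ as $t\to 0$, the integral is at least of order $1/\lambda$ and therefore dominates the exponential tail, so the ratio \eqref{pot-e15} is bounded away from $0$. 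Lemma~\ref{lem-reg} then gives $x\in D^r$ for every $x\in\overline{D}$, which is the assertion $\overline{D}=D\cup D^r=\widetilde D$. The main technical obstacle, as I see it, is the upper bound for arbitrary $y\in\rn$: the convolution kernel $Q_t$ can shift the effective centre off $\overline{D}$, and the clean way to handle this is precisely the extension of \eqref{set-e45} to arbitrary centres, which makes the tail estimate insensitive to the location of $y+\supp Q_t$ relative to $\overline{D}$.
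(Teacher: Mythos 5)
Your proposal is correct and follows essentially the same route as the paper's appendix proof: both reduce the claim to Lemma~\ref{lem-reg} by showing $\int_D p_t(\cdot,y)\,\varpi(dy)\asymp \rho_t^{n-d}$ (lower bound at $x\in\overline{D}$ from \eqref{set-e35} and the lower half of \eqref{set-e45}, upper bound uniformly in the centre from \eqref{set-e36}, the layer-cake formula and the upper half of \eqref{set-e45}), so that $R_\lambda\varpi(x)\gtrsim\sup_y R_\lambda\varpi(y)$ for large $\lambda$. The only cosmetic difference is the treatment of the $t>1$ tail, which the paper controls via the Kato-class property and you control via $\|p_1\|_\infty\varpi(\overline D)e^{-\lambda}$; both are dominated by the main term of order $\lambda^{-1}$.
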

In order to keep the presentation transparent, we defer the rather technical proof of this lemma to the appendix.

Here is a criterion for the non-polarity of a set $D$ based on the inequality \eqref{pot-e40}.
\begin{corollary}\label{poten}
    Assume that there exists some $\varpi\in \Kato$  w.r.t.\ $p_t(x,y)$ such that $\supp\varpi= \overline{D}$.  Then the set  $D$  is non-polar for $X$, i.e.
    \begin{equation}\label{pot-e50}
        \Pp^x(\tau_D<\infty) > 0.
    \end{equation}
\end{corollary}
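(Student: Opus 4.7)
The plan is to reuse the identity
\begin{equation*}
R_\lambda \varpi(x) = \Ee^x\!\left[e^{-\lambda \tau_D}\,R_\lambda \varpi(X_{\tau_D})\right]
\end{equation*}
that was established in the course of the proof of Lemma~\ref{lem-reg}; here we adopt the convention $e^{-\lambda\cdot\infty}=0$, so that the expectation is in fact concentrated on $\{\tau_D<\infty\}$. Since paths are right-continuous and $\overline D$ is closed, $X_{\tau_D}\in\overline D=\supp\varpi$ on the event $\{\tau_D<\infty\}$, which allows us to replace $R_\lambda\varpi(X_{\tau_D})$ by $\sup_{y\in\overline D}R_\lambda\varpi(y)$.

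Next I would choose $\lambda$ large. By assumption, $\varpi\in\Kato$, and Remark~\ref{equiv} then gives $M_\lambda:=\sup_{y\in\rn}R_\lambda\varpi(y)\to 0$ as $\lambda\to\infty$. In particular $M_\lambda<\infty$ for every sufficiently large $\lambda$, so that the identity above yields
\begin{equation*}
R_\lambda\varpi(x)\;\leq\; M_\lambda\cdot\Ee^x\!\left[e^{-\lambda \tau_D}\I_{\{\tau_D<\infty\}}\right].
\end{equation*}
It therefore suffices to verify that $R_\lambda\varpi(x)>0$ for such $\lambda$: once this is done, the displayed inequality forces $\Ee^x[e^{-\lambda \tau_D}\I_{\{\tau_D<\infty\}}]>0$, and hence $\Pp^x(\tau_D<\infty)>0$.

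The strict positivity of $R_\lambda\varpi(x)$ is the only slightly delicate point, and it is where the lower bound \eqref{set-e35} on the transition density enters. Since $\varpi$ is a nonzero finite measure supported on $\overline D$, there is some point $y_0\in\overline D$ and some $r>0$ with $\varpi(B(y_0,r))>0$; combining this with the strict positivity of $p_t(x,y)$ furnished by \eqref{set-e35} on a neighbourhood of each $x$ for small $t$ (and propagating to arbitrary $x,y$ by Chapman--Kolmogorov), one gets $\int p_s(x,y)\,\varpi(dy)>0$ on a set of $s$ of positive Lebesgue measure, and consequently $R_\lambda\varpi(x)=\int_0^\infty e^{-\lambda s}\int p_s(x,y)\,\varpi(dy)\,ds>0$. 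This last verification is the main (if mild) obstacle; the rest of the argument is a direct consequence of \eqref{pot-e40} and the Kato condition.
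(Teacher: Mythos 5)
Your proposal is correct and follows essentially the same route as the paper: the key inequality \eqref{pot-e40} from the proof of Lemma~\ref{lem-reg}, finiteness of $\sup_y R_\lambda\varpi(y)$ via the Kato condition and Remark~\ref{equiv}, and strict positivity of $R_\lambda\varpi(x)$ obtained from the lower heat-kernel bound \eqref{set-e35} propagated by the Chapman--Kolmogorov equation. The paper's proof carries out your final positivity step in the same way, just with the details (choice of $t_0$, $y_0$, joint continuity of $p_t(x,y)$) written out explicitly.
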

\begin{proof}
    We know from \cite{KT06}, see also Remark~\ref{equiv}, that $\varpi\in \Kato$ satisfies $\sup_x R_\lambda \varpi(x)<\infty$ for some $\lambda>0$. From \eqref{pot-e40} we derive
    \begin{equation*}
        \frac{R_\lambda  \varpi(x)}{\sup_{y\in \overline{D}} R_\lambda\varpi(y)}\leq  \Pp^x (\tau_D<\infty).
    \end{equation*}
    Let us show that $R_{\lambda}\varpi(x)>0$.  For this we show
that
$$
    p_t(x,y)>0\quad \text{for all $ t>0$,\,\, $x,y\in \rn$.}
$$
There is a minimal $N$, such that the distance from $x$ to $y$ can be covered by $N$ balls of radius less than $(2a_2\rho_{t/N})^{-1}$ (where $a_2>0$ is the constant appearing in the representation of $f_{low}$), i.e.\ the smallest $N$, for which the inequality
\begin{equation}\label{pot-e52}
    \frac{|x-y|}{N}\leq \frac{1}{a_2\rho_{t/N}}
\end{equation}
holds true. Observe that $q^*(r)\leq c_1 r^2$, $r\geq 1$, implying $c_2 t^{-1/2}\leq \rho_t$, for all $t$ small enough.  Hence, \eqref{pot-e52} is valid for all $N \geq (a_2 c_2 |x-y|)^2/{t}$.
Therefore, putting $y_0=x$, $y_N=y$, we get
\begin{align*}
p_t(x,y)&= \int_\rn \dots \int_\rn \left(\prod_{i=1}^N p_{t/N}(y_{i-1},y_i) \right) dy_1\dots dy_N\\
&\geq \int_{B(y_0,(2a_2\rho_{t/N})^{-1})} \dots \int_{B(y_{N-1}, (2a_2\rho_{t/N})^{-1})}\prod_{i=1}^N p_{t/N}(y_{i-1},y_i) \,dy_i \\
&\geq c_0 \rho_{t/N}^{Nn}.
\end{align*}
In the last line we use \eqref{set-e35} which gives
$$
    p_{t/N} (y_{i-1},y_i)\geq  2^{-1}a_1 \rho_{t/N}^n \quad
    \forall y_i\in B(y_{i-1},(2a_2\rho_{t/N})^{-1}).
$$
Thus, the transition probability density $p_t(x,y)$ is strictly positive, which implies
$$
    R_\lambda\varpi(x)
    \geq e^{-\lambda}\int_0^1\int_D p_t(x,y) \,\varpi(dy) \,dt
    > 0.
$$
Hence, we get \eqref{pot-e50}.
\end{proof}

\begin{remark}\label{low-rem}
\textbf{a)}
    Under the assumptions of Corollary~\ref{poten} one has $\Pp^x(\tau_D<\infty)> c_K$ uniformly for all $x\in K$ where $K\subset \rn$ is a compact set.

\medskip\textbf{b)}
    If, in addition, the process $X$ is recurrent, then $\Pp^x(\tau_D<\infty)=1$, see \cite[p.~60]{Sh88}.

\medskip\textbf{c)}
    Suppose that $X$ is one-dimensional ($n=1$) and $\int_1^\infty q^*(s)^{-1}\,ds<\infty$.
    Then there exists a local time for any point $x\in \rn$, see \cite{KK13b}. Let $D=\{x\}$, where $x$ is the starting point of $X_t$. Then $R_\lambda\varpi(x) = \sup_{y\in \real} R_\lambda\varpi(y)$, i.e.\ the left-hand side of \eqref{pot-e15} is equal to $1$, implying that every point is regular for itself.

    On the other hand, if $n\geq 2$, we always have $\int_{|\xi|\geq 1} q^*(\xi)^{-1}\,d\xi = \infty$, i.e.\ for $n\geq 2$ points are polar.
\end{remark}

\section{Proof of Theorem~\ref{H-bounds} and  Proposition  \ref{prop-H2}}\label{s4}

Throughout this section $X=(X_t)_{t\geq 0}$ is a Feller process as in Section~\ref{set}. Let $(\Omega^*, \mathcal{F}^*,\Pp^*)$ be a further probability space and define on this space a $\gamma$-stable subordinator $T^{\gamma} = (T_t^{\gamma})_{t\geq 0}$, $\gamma\in (0,1)$. $T_t^{\gamma}$ has a transition probability density $\sigma_t^{(\gamma)}(s)$, and
\begin{equation*}
    \int_0^\infty e^{-\lambda s}\sigma_t^{(\gamma)}(s)\,ds
    = e^{-t \lambda^\gamma}, \quad \lambda>0,\; t>0.
\end{equation*}
From this we immediately get the following scaling property
\begin{equation}\label{s4-e07}
    \sigma_t^{(\gamma)} (s)
    = t^{-1/\gamma} \sigma_1^{(\gamma)}(s t^{-1/\gamma}).
\end{equation}
Let $X^{\gamma}_t:= X_{T_t^{\gamma}}$ be the subordinate  process. Its transition probability density $p^{(\gamma)}_t(x,y)$ is given by
\begin{equation}\label{s4-e10}
    p^{(\gamma)}_t(x,y)
    = \int_0^\infty p_s(x,y) \sigma_t^{(\gamma)}(s)\,ds,
\end{equation}
see, for example, \cite[Theorem~4.3.1]{Ja01}.

The technical proof of the following lemma is deferred to the appendix. Recall that $\Kato$ denotes the Kato class of measures, cf.\ Definition~\ref{Kato}. If $\gamma=1$, $T_t^{(\gamma)}\equiv t$, and the `subordinate' kernel $p^{(1)}_t(x,y)$ is just $p_t(x,y)$.
\begin{lemma}\label{lem-SK}
\textbf{\upshape a)}
    Suppose that $\varpi$ satisfies
    \begin{equation}\label{s4-e15}
        \int_0^1 \sup_x \frac{\varpi(B(x,r))}{(q^*)^{\gamma}(1/r)}\,\frac{dr}{r^{n+1}}
        < \infty, \quad\text{for some}\quad \gamma\in (0,1].
     \end{equation}
    Then $\varpi\in\Kato$ with respect to $p^{(\gamma)}_t (x,y)$.

\medskip\textbf{\upshape b)}
    Suppose that $\varpi\in\Kato$ with respect to $p^{(\gamma)}_t(x,y)$, where $\gamma\in (0,1]$. Then
    \begin{equation}\label{s4-e20}
        \lim_{t\to 0} \sup_x \int_0^t \frac{\varpi(B(x,r))}{(q^*)^\gamma(1/r)}\,\frac{dr}{r^{n+1}}
        = 0.
    \end{equation}
\end{lemma}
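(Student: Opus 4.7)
The plan is to reduce both implications to a single integral identity. By Fubini's theorem and \eqref{s4-e10},
$$
I_t(x) := \int_0^t\int_\rn p^{(\gamma)}_s(x,y)\,\varpi(dy)\,ds = \int_0^\infty N_u(x)\,\Psi_t^{(\gamma)}(u)\,du,
$$
where $N_u(x) := \int_\rn p_u(x,y)\,\varpi(dy)$ and $\Psi_t^{(\gamma)}(u) := \int_0^t \sigma_s^{(\gamma)}(u)\,ds$. A direct scaling computation via \eqref{s4-e07} (substitute $v = u s^{-1/\gamma}$) rewrites $\Psi_t^{(\gamma)}(u) = \gamma u^{\gamma-1}\int_{u/t^{1/\gamma}}^\infty v^{-\gamma}\sigma_1^{(\gamma)}(v)\,dv$, which, together with the Pareto tail $\sigma_1^{(\gamma)}(v)\asymp v^{-1-\gamma}$ at infinity, gives $\Psi_t^{(\gamma)}(u) \asymp u^{\gamma-1}$ on $(0,t^{1/\gamma}]$ and $\Psi_t^{(\gamma)}(u) \leq C t u^{-1-\gamma}$ on $[t^{1/\gamma},\infty)$.

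For part (a), I would insert the upper estimate \eqref{set-e36} into $N_u(x)$. Because $Q_u$ is a sub-probability measure and $f_{\text{up}}$ decays exponentially, dyadic decomposition of $\rn$ into annuli $B(x,2^{k+1}/\rho_u)\setminus B(x,2^k/\rho_u)$ yields $N_u(x)\leq C\rho_u^n\sum_{k\geq 0}e^{-c2^k}\varpi(B(x,2^{k+1}/\rho_u))$, which is comparable (uniformly in $x$) to $\rho_u^n\sup_x\varpi(B(x,c'/\rho_u))$. Splitting the $u$-integration at $t^{1/\gamma}$, applying the two-sided bound on $\Psi_t^{(\gamma)}$, and carrying out the monotone substitution $r = 1/\rho_u$ (using $q^*(\rho_u)\asymp 1/u$, the factor $u^{\gamma-1}\,du$ becomes comparable to $dr/[r(q^*)^\gamma(1/r)]$) produces
$$
\sup_x I_t(x) \leq C''\int_0^{c t^{1/\gamma}}\sup_x\frac{\varpi(B(x,r))}{(q^*)^\gamma(1/r)}\cdot\frac{dr}{r^{n+1}}.
$$
Hypothesis \eqref{s4-e15} then forces the right-hand side to vanish as $t\to 0$.

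Part (b) follows the same route with inequalities reversed. The lower estimate \eqref{set-e35} gives $N_u(x)\geq c\rho_u^n\,\varpi(B(x,c'/\rho_u))$; restricting to $u\in(0,t^{1/\gamma}]$, where $\Psi_t^{(\gamma)}(u)\geq c_\gamma u^{\gamma-1}$, and applying the same change of variable yields
$$
I_t(x)\geq c_1\int_0^{t^{1/\gamma}}\rho_u^n\,\varpi(B(x,c'/\rho_u))\,u^{\gamma-1}\,du \geq c_2\int_0^{\delta(t)}\frac{\varpi(B(x,r))}{(q^*)^\gamma(1/r)}\cdot\frac{dr}{r^{n+1}}
$$
with $\delta(t)\to 0$. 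Since $\sup_x I_t(x)\to 0$ by the Kato assumption, the supremum over $x$ of the right-hand side must tend to $0$ as well, which is \eqref{s4-e20}.

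The chief technical obstacle is the substitution $r = 1/\rho_u$: the map $u\mapsto\rho_u$ is merely monotone, so it has to be handled via Stieltjes integration against $d\rho_u$, and the equivalence $u^{\gamma-1}\,du \asymp dr/[r(q^*)^\gamma(1/r)]$ requires a doubling-type estimate $q^*(2r)\leq C q^*(r)$, which follows from \eqref{set-e30}. A secondary but routine point is the uniformity in $x$ of the dyadic-annulus estimate in part (a), which is immediate from the summability of $e^{-c2^k}$.
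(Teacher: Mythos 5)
Your overall strategy is the same as the paper's (use the scaling \eqref{s4-e07} of $\sigma_t^{(\gamma)}$, convert the time integral into an integral over spatial scales via $r=1/\rho_u$ and $q^*(\rho_u)\asymp 1/u$, and compare with \eqref{s4-e15}), and your part~(b) is essentially the paper's argument leading to \eqref{app-e60}: restricting to $u\leq t^{1/\gamma}$, where $\Psi_t^{(\gamma)}(u)\geq c_\gamma u^{\gamma-1}$, is exactly what the paper does by restricting $\tau\in[1,2^{1/\gamma}]$. Part~(a), however, has two genuine gaps. First, the estimates \eqref{set-e35}--\eqref{set-e36} are only valid for $u\in(0,1]$, and the subordinator spends mass on $u>1$; this range must be handled separately (the paper's term $I_2$), e.g.\ via $\sup_{x,y}p_u(x,y)\leq C$ for $u\geq 1$ (Chapman--Kolmogorov) together with $\int_1^\infty \Psi_t^{(\gamma)}(u)\,du\leq Ct^2\,$. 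You do not address this range at all.

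Second, and more seriously, your claimed inequality $\sup_x I_t(x)\leq C''\int_0^{ct^{1/\gamma}}\sup_x\varpi(B(x,r))(q^*)^{-\gamma}(1/r)\,r^{-n-1}dr$ does not follow from your steps on the intermediate range $t^{1/\gamma}\leq u\leq 1$. There the substitution $r=1/\rho_u$ produces scales $r\geq\delta(t):=1/\rho_{t^{1/\gamma}}$, i.e.\ precisely the \emph{complement} of the interval $[0,ct^{1/\gamma}]$ you integrate over, so this contribution cannot be absorbed into the right-hand side. Quantitatively, $u^{-1-\gamma}du\asymp (q^*(1/r))^{2\gamma}\cdot (q^*(1/r))^{-\gamma}\,dr/r$, so the middle range yields $\int_{\delta(t)}^{r_1} h(r)\,\Psi\text{-weight}\cdot (q^*(1/r))^{2\gamma}(q^*)^{-\gamma}(1/r)\,r^{-n-1}dr$ with $h(r)=\sup_x\varpi(B(x,r))$. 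With your stated bound $\Psi_t^{(\gamma)}(u)\leq Ctu^{-1-\gamma}$ the prefactor is $t(q^*(1/r))^{2\gamma}\leq Ct^{-1}$ on this range, which is useless; one needs the sharper (and correct) bound $\Psi_t^{(\gamma)}(u)\leq Ct^2u^{-1-\gamma}$, which makes the prefactor $t^2(q^*(1/r))^{2\gamma}\leq C$ and tending to $0$ pointwise, so that the term vanishes by dominated convergence against the integrable majorant supplied by \eqref{s4-e15} --- not by the direct inequality you assert. This dominated-convergence step is exactly the role of the paper's decomposition $I_{11}+I_{12}$ and of the cut-off function $I(v,\tau,T)$; without it (or a proof that the middle-range term is dominated by the small-$r$ integral, which is not clear beyond exact power-law $q^*$), the proof of part~(a) is incomplete. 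The remaining ingredients you use --- the dyadic-annulus bound for the convolution with $Q_u$ in place of the paper's layer-cake computation, the doubling of $q^*$ from \eqref{set-e30}, the covering argument replacing $h(cr)$ by $h(r)$ --- are fine.
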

The next lemma is due to Hawkes \cite[Lemma~2.1]{Ha74}, cf.\ also \cite[Proof of Theorem 1]{Ha71}; it plays the key role in the proof of Theorem~\ref{H-bounds}.
\begin{lemma}\label{ha}
    Let $T^{\gamma}$ be a stable subordinator of index $\gamma\in (0,1)$, and let $B\subset [0,\infty)$ be a Borel set. Then
    \begin{align*}
        \Pp\left(T_t^{\gamma}\in B\quad \text{for some $t>0$}\right)
        = 0 \quad &\text{implies} \quad \dim B \leq 1-\gamma,
    \intertext{while}
        \Pp\left(T_t^{\gamma}\in B\quad \text{for some $t>0$}\right)
        > 0 \quad &\text{implies}\quad \dim B \geq 1-\gamma.
    \end{align*}
\end{lemma}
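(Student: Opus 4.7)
The plan is to reduce both implications to a single potential-theoretic hitting dichotomy
$$
    \Pp\bigl(T^{\gamma}_t\in B \text{ for some } t>0\bigr)>0
    \iff
    C_{1-\gamma}(B)>0,
$$
where $C_{s}$ denotes the Riesz $s$-capacity on the real line; once this is established, Frostman's theorem translates the capacity condition into the Hausdorff-dimension statements.

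\textbf{Step 1 (Potential density of $T^\gamma$).} From $\Ee e^{-\lambda T^{\gamma}_t}=e^{-t\lambda^{\gamma}}$ one reads off that the $0$-potential measure $U(dy)=\int_0^\infty \Pp(T^{\gamma}_t\in dy)\,dt$ has Laplace transform $\lambda^{-\gamma}$, hence density $u(y)=y^{\gamma-1}/\Gamma(\gamma)$ on $(0,\infty)$. Symmetrising in the two orderings of $(x,y)$, for any probability measure $\mu$ with compact support in $[0,\infty)$ one has
$$
    \iint u(y-x)\,\mu(dx)\,\mu(dy) \asymp \iint|y-x|^{-(1-\gamma)}\,\mu(dx)\,\mu(dy),
$$
so the $u$-energy is comparable with the classical Riesz $(1-\gamma)$-energy.

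\textbf{Step 2 (Hitting criterion and Frostman).} Because $T^{\gamma}$ is transient for $\gamma\in(0,1)$, the energy principle for transient L\'evy processes yields that a compact set $B$ is non-polar for $T^{\gamma}$ iff some probability measure on $B$ has finite $u$-energy, i.e.\ iff $C_{1-\gamma}(B)>0$; Choquet capacitability/inner regularity extends the equivalence to arbitrary Borel $B$. Combined with Frostman's theorem $\dim B=\sup\{s:C_s(B)>0\}$, this gives both implications simultaneously: $\dim B>1-\gamma$ forces $C_{1-\gamma}(B)>0$ and hence $\Pp>0$, which is the contrapositive of the first implication, while $\dim B<1-\gamma$ forces $C_{1-\gamma}(B)=0$ and hence $\Pp=0$, the contrapositive of the second.

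\textbf{Main obstacle.} The crucial step is the hitting/capacity dichotomy in Step 2. Although classical for subordinators, it rests on the one-sided version of the equivalence ``bounded potential $U\mu$ $\Leftrightarrow$ finite $u$-energy'' and on a careful symmetrisation of $u(y-x)$, which is supported on $\{y\geq x\}$, before it can be identified with a symmetric Riesz kernel. Everything else is a transparent Frostman-style translation between capacity and Hausdorff dimension and should not require additional work beyond citing standard results.
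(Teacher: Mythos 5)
The paper does not prove this lemma; it simply cites Hawkes \cite[Lemma~2.1]{Ha74}, whose original argument is exactly the route you take: identify the $0$-potential density $y^{\gamma-1}/\Gamma(\gamma)$ of the subordinator, invoke the classical equivalence between non-polarity and positivity of the Riesz $(1-\gamma)$-capacity (Hawkes 1970), and conclude with Frostman's theorem together with capacitability to pass from compact to Borel sets. Your proposal is correct and coincides with the standard proof of the cited result.
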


\bigskip
We are now ready for the
    \begin{proof}[Proof of Theorem~\ref{H-bounds}]
    By assumption, $D$ is a closed $d$-set; pick a corresponding $d$-measure $\varpi$ on $D$.  For $d > n-\alpha$ we have
    \begin{equation*}
        \int_0^1 \sup_x \frac{\varpi(B(x,r))}{q^*(1/r)}\,\frac{dr}{r^{n+1}}
        \leq c_1\int_0^1 \frac{r^d}{q^*(1/r)}\,\frac{dr}{r^{n+1}}
        \leq c_2 \int_0^1 r^{d+\alpha-n-1}\,dr
        < \infty,
    \end{equation*}
    where we used that $q^*(r)\geq c  r^\alpha$, cf.\ \eqref{set-e33}. By Lemma~\ref{lem-SK} (used for $\gamma=1$) we have $\varpi\in\Kato$ w.r.t.\ $p_t(x,y)$, and by Lemma~\ref{lem-dmeas} all points of $D$ are regular for $D=\overline{D}$.

    As $X_0=x\in D$, the set $\{s\,:\, X_s(\omega)\in D\}$ is a.s.\ non-empty, and therefore the random set
    \begin{equation}\label{s4-e30}
    \begin{split}
        W
        := &\left\{ (\omega,\omega^*)\,:\, X_{T^{\gamma}_t(\omega^*)}(\omega) \in D \quad\text{for some $t>0$}\right\}\\
        =  &\left\{ (\omega,\omega^*)\,:\, T_t^{\gamma}(\omega^*)\in \{s\,:\, X_s (\omega)\in D\}\quad \text{for some $t>0$}\right\}
    \end{split}
    \end{equation}
    is well-defined and non-void.

    First we calculate the lower bound of the Hausdorff dimension of the random set $\{ s:\, X_s(\omega)\in D\}$. Assume that  $\gamma\in (\gamma_{\inf},1)$. Recall  that the  transition probability density of  the subordinate process $X_{T^{\gamma}_t(\omega^*)}(\omega)$
    is given by \eqref{s4-e10}. By Lemma~\ref{lem-SK},  $\varpi\in \Kato$ with  respect to $p^{(\gamma)}_t(x,y)$  for any $\gamma\in (\gamma_{\inf},1) $.  Using  Lemma~\ref{lem-dmeas} we see that the points, which are regular for $D$ ``in the eyes'' of the original process $X$, are still regular for $D$ and the subordinate  process $X_{T_t^{\gamma}}$---whenever $\gamma\in (\gamma_{\inf},1)$. This  implies that the set $W$ has full $\Pp^x\otimes \Pp^*$-measure. Thus, \eqref{s4-e30} yields
    $$
        1
        = (\Pp^x\otimes \Pp^*)(W)
        = \int_\Omega \Pp^*\left(\omega^*\::\: T_t^{\gamma}(\omega^*)\in \{ s\,:\, X_s (\omega) \in D\}\quad \text{for some $t>0$}\right) \Pp^x(d\omega),
    $$
    which in turn gives
    $$
        \Pp^x\left(\omega:\, \Pp^*\left[ \omega^*\::\: T_t^{\gamma}(\omega^*)\in \{ s\,:\, X_s(\omega) \in D\}\quad \text{for some $t>0$}\right] > 0\right) = 1.
    $$
    Now Lemma~\ref{ha} shows $\dim \{s: \, X_s(\omega)\in D\}\geq 1-\gamma$ with $\Pp^x$-probability $1$; letting $\gamma\downarrow \gamma_{\inf}$ along a countable sequence we arrive at
    \begin{equation*}
        \dim \{s: \, X_s(\omega)\in D\}
        \geq 1-\gamma_{\inf}\qquad \Pp^x\text{-a.s.}
    \end{equation*}

    To show the upper bound in \eqref{set-e50}, we take  $\gamma\in (0,\gamma_{\sup})$. By the definition of $\gamma_{\sup}$,
    $$
        x\mapsto \int_0^\delta \frac{\varpi(B(x,r))}{(q^*)^\gamma (1/r)}\,\frac{dr}{r^{n+1}}
    $$
    is unbounded for any finite measure $\varpi$ supported in $D$.  There exist, see \eqref{app-e60} below, constants $a,\, b, \delta(T)>0$ such that
    $$
        \int_0^T \int_D p^{(\gamma)}_t(x,y)\,\varpi(dy)\,dt
        \geq a \int_0^{\delta(T)} \frac{\varpi(B(x,r))}{(q^*)^\gamma(1/r)} \,\frac{dr}{r^{n+1}}.
    $$
    Thus, $R_0 \varpi(x)$ is unbounded and, by Remark~\ref{equiv}, the set $D$ is polar for $X_t^{\gamma}$. Therefore, $(\Pp^x\otimes \Pp^*)(W)=0$ and, consequently,
    $$
        \Pp^x\left(\omega\,:\, \Pp^*\left[\omega^*\::\: T_t^{\gamma}(\omega^*)\in \{ s\,:\, X_s^x (\omega) \in D\}\quad \text{for some $t>0$}\right] =0 \right) = 1.
    $$
    This means that $\{s\,:\, X_s^x (\omega) \in D\}$ is polar for $T_t^{\gamma}$ with $\Pp^x$-probability $1$.  Applying Lemma~\ref{ha} we get
    $\dim  \{s: \, X_s^x(\omega)\in D\}\leq 1-\gamma$ with $\Pp^x$-probability $1$. Letting $\gamma \uparrow \gamma_{\sup}$ along a countable sequence, the upper bound in \eqref{set-e50} follows.
\end{proof}

\bigskip
\begin{proof}[Proof of Proposition~\ref{prop-H2}]
    Since the processes $X^1$ and $X^2$ are, up to different starting points, i.i.d.\ copies, the transition probability density of $\tilde{X}_t:= X^1_t-X_t^2$ is given by
    $$
        \tilde{p}_t(x,y)
        = \int_\real p_t(x+x_0,z+y) p_t(x_0,z)\,dz;
    $$
    here $x_0\in \real$ is the starting point of $X_t^2$. Let us estimate $\tilde{p}_t(x,y)$ using the upper bounds \eqref{set-e36} for $p_t(x,y)$.  By the triangle inequality we have for any $\epsilon>0$ and $w_1,w_2\in\real$
    \begin{align*}
        \int_\real \rho_t^2 &e^{-a_4\rho_t \,|z+y-x-x_0-w_1|} e^{-a_4\rho_t\,|x_0-z+w_2|} \,dz\\
        &\leq \rho_t\, e^{-a_4\epsilon \rho_t \,|y-x-w_1+w_2|} \int_\real \rho_t \,e^{-a_4(1-\epsilon)  \rho_t\cdot ( |z+y-x-x_0-w_1|+|x_0-z+w_2|)}\,dz \\
        &\leq c \rho_t \,e^{-a_4\epsilon \rho_t\,|y-x-w_1+w_2|}.
    \end{align*}
    This yields the following upper bound for $\tilde{p}_t(x,y)$:
    \begin{equation*}
    \begin{split}
        \tilde{p}_t(x,y)
        &\leq a_3^2 \iiint_{\real^3} \rho_t^2 e^{-a_4\rho_t \,|z-x-x_0-w_1|} e^{-a_4\rho_t\,|z-x_0-w_2|} \,dz \,Q_t(dw_1) \,Q_t(dw_2)\\
        &\leq C \rho_t \left(f_{\textrm{up}}^{\epsilon} (\rho_t \cdot\, ) * \tilde{Q}_t\right)(y-x),
    \end{split}
    \end{equation*}
    where $\tilde{Q}_t(dw):= \int_\real Q_t(dw+v)\,Q_t(dv)$ is again a sub-probability measure. In other words, the transition probability density of $\tilde{X}$ has an upper bound of the same form as $p_t(x,y)$.

    To show the lower bound, take $x,y$ such that $\rho_t \,|y-x|\leq a_2^{-1}(1-a_2 \epsilon)$, where $\epsilon>0$ is small. Then
    \begin{align*}
        \tilde{p}_t(x,y)
        &\geq a_1^2 \rho_t^2 \int_\real f_{\textrm{low}}((y+z-x-x_0)\rho_t)f_{\textrm{low}}((x_0-z)\rho_t)\,dz\\
        &\geq a_1\rho_t \int_{|v|\leq \epsilon} f_{\textrm{low}} (\rho_t (y-x-v/\rho_t))f_{\textrm{low}}(v)\,dv.
    \end{align*}
    Since for $|v|\leq\epsilon$
    $$
        1-a_2 \rho_t \,|y-x-v/\rho_t|
        \geq 1-a_2 \epsilon -a_2 \rho_t\, |y-x|
        = (1-\epsilon a_2)\left( 1- \frac{a_2}{1-a_2\epsilon} \rho_t \,|y-x|\right),
    $$
    we get for all $x,y$ such that $\rho_t \,|y-x|\leq a_2^{-1}(1-a_2 \epsilon)$ the estimate
    $$
        \tilde{p}_t(x,y)
        \geq c_1 \rho_t\left( 1- c_2 \rho_t\,|y-x|\right)
    $$
    with $c_1= a_1 (1-a_2\epsilon) \int_{|v|\leq\epsilon} f_{\textrm{low}}(v)\,dv$ and $c_2= a_2 (1-a_2\epsilon)^{-1}$. Thus, the lower bound for $\tilde{p}_t(x,y)$ is also of the same form as the one for $p_t(x,y)$.

    We have shown that the symmetrized process $\tilde X$ satisfies the estimates \eqref{set-e35} and \eqref{set-e36}, and these estimates are the essential ingredient in the proof of Corollary~\ref{cor-H}.\footnote{Notice that Assumption~A in Corollary~\ref{cor-H} is just used to ensure that we have \eqref{set-e35} and \eqref{set-e36}.}. Thus, we can apply Corollary~\ref{cor-H}, and the proof is finished.
\end{proof}

\section{Proof of Theorem~\ref{t-col}}\label{s5}

Throughout this section we work under the assumptions of Theorem~\ref{t-col}: Assumption~A holds and the process $X$ is recurrent. We denote the transition probability density by $p_t(x,y)$, $t>0$. Recall that the process $X$ is called
\begin{enumerate}
\item[1)] (\emph{neighbourhood}) \emph{recurrent} if
$$
    \forall x\in \real\quad
    \forall \text{open sets\ } G\subset\real
    \::\: \Pp^x \left(X_t\in G\quad \text{for some $t>0$}\right) = 1.
$$
\item[2)] \emph{point recurrent}, if
$$
    \forall x,y\in\real\::\: \Pp^x\left(X_t=y\quad \text{for some $t>0$}\right) = 1.
$$
\end{enumerate}
Using the arguments of \cite[Lemma~4.1]{JP69} we can show that, in the setting of Theorem~\ref{t-col}, the recurrence of $X$ already implies point recurrence.
\begin{lemma}\label{lem-rec}
    The process $X$ is point recurrent.
\end{lemma}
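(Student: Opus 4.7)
The plan is to upgrade neighbourhood recurrence to point recurrence by showing that $h(x):=\Pp^x(\tau_{\{y\}}<\infty)$ equals $1$ for every $x,y\in\real$. The assumptions $n=1$ and $\alpha>1$ will be essential: by \eqref{set-e33} one has $q^*(s)\ge c\,s^\alpha$ for $s\ge 1$, so $\int_1^\infty q^*(s)^{-1}\,ds<\infty$ and Remark~\ref{low-rem}c applies, giving in particular that every point is regular for itself, i.e.\ $h(y)=1$.

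First I would verify that $\delta_y\in\Kato$ with respect to $p_t(x,y)$. A crude consequence of \eqref{set-e36} is the universal bound $p_s(x,y)\le C\rho_s$; since \eqref{set-e33} yields $\rho_s\le c'\,s^{-1/\alpha}$ for small $s$, one gets
\begin{equation*}
    \sup_x\int_0^t p_s(x,y)\,ds\le C''\,t^{\,1-1/\alpha}\xrightarrow{t\downarrow 0} 0,
\end{equation*}
as $\alpha>1$. Next I would establish that $h(z)\to 1$ as $z\to y$: applying the bound \eqref{pot-e40} from the proof of Lemma~\ref{lem-reg} to $D=\{y\}$, $\varpi=\delta_y$ gives
\begin{equation*}
    \Ee^z e^{-\lambda\tau_{\{y\}}}\ge\frac{r_\lambda(z,y)}{r_\lambda(y,y)},\qquad\lambda>0.
\end{equation*}
To pass to the limit $z\to y$ one needs the $\lambda$-resolvent kernel to be continuous at the diagonal. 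Splitting $r_\lambda(z,y)=\int_0^\varepsilon e^{-\lambda t}p_t(z,y)\,dt+\int_\varepsilon^\infty e^{-\lambda t}p_t(z,y)\,dt$, the tail is continuous in $z$ by the joint continuity of $p_t$ on $[\varepsilon,\infty)\times\real\times\real$ from Assumption~A, while the head is uniformly bounded in $z$ by $C\int_0^\varepsilon \rho_s\,ds=O(\varepsilon^{1-1/\alpha})$. The lower bound \eqref{set-e35} ensures $r_\lambda(y,y)\in(0,\infty)$. Consequently $\Ee^z e^{-\lambda\tau_{\{y\}}}\to 1$, so $\tau_{\{y\}}\to 0$ in $\Pp^z$-probability, whence $h(z)\to 1$.

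Finally, to transfer the conclusion to an arbitrary starting point $x$, I would invoke neighbourhood recurrence. For each $\varepsilon>0$ the stopping time $\sigma_\varepsilon:=\tau_{B(y,\varepsilon)}$ is $\Pp^x$-a.s.\ finite with $|X_{\sigma_\varepsilon}-y|<\varepsilon$, and the strong Markov property yields
\begin{equation*}
    h(x)\ge\Ee^x h(X_{\sigma_\varepsilon})\ge\inf_{|z-y|<\varepsilon}h(z).
\end{equation*}
Letting $\varepsilon\downarrow 0$ and applying the previous step forces $h(x)\ge 1$, as required.

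The main obstacle will be the continuity of $r_\lambda(\cdot,y)$ at $y$ in the second step, where the hypothesis $\alpha>1$ does the work three times over: to place $\delta_y$ in $\Kato$, to guarantee $r_\lambda(y,y)<\infty$, and to render the short-time piece of $r_\lambda$ equicontinuous in $z$ as $z\to y$.
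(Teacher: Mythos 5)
Your argument is correct, and after the common first step it diverges from the paper's in an interesting way. Both proofs begin identically: the bound $\sup_{x,y}\int_0^t p_s(x,y)\,ds\leq Ct^{1-1/\alpha}$ puts $\delta_y$ in $\Kato$, whence every point is regular for itself and $h(y)=\Pp^y(\tau_{\{y\}}<\infty)=1$. To propagate this to all starting points, the paper shows that $\Phi(\cdot,y)=h(\cdot)$ is excessive ($P_t\Phi\leq\Phi$ and $P_t\Phi\uparrow\Phi$) and then invokes the general fact that a recurrent process admits only constant excessive functions (Sharpe, Exercise~10.39), so $\Phi\equiv\Phi(y,y)=1$. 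You instead prove the quantitative statement $h(z)\to 1$ as $z\to y$, via \eqref{pot-e40} and the continuity of $r_\lambda(\cdot,y)$ at the diagonal, and then chain neighbourhood recurrence with the strong Markov property at $\sigma_\varepsilon=\tau_{B(y,\varepsilon)}$. Your route is more self-contained --- it replaces an abstract potential-theoretic theorem by an explicit resolvent estimate, and the ingredients (joint continuity of $p_t$ for $t\geq\varepsilon$, the on-diagonal bound $\rho_t\lesssim t^{-1/\alpha}$ with $\alpha>1$, and $r_\lambda(y,y)\in(0,\infty)$ from \eqref{set-e35}) are all available under Assumption~A --- at the cost of being longer and of using the same hypothesis $\alpha>1$ in three places, as you note. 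Two cosmetic points: since $X$ may jump, at the hitting time of the open ball you only have $X_{\sigma_\varepsilon}\in\overline{B(y,\varepsilon)}$, so the infimum should run over $|z-y|\leq\varepsilon$ (harmless, as your step two gives $\lim_{z\to y}h(z)=1$); and the passage from $\Ee^z e^{-\lambda\tau_{\{y\}}}\to 1$ to $h(z)\to 1$ deserves the one-line justification $\Pp^z(\tau>M)\leq(1-\Ee^z e^{-\lambda\tau})/(1-e^{-\lambda M})$.
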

\begin{proof}
    Write $\tau_y:=\inf\{ t>0\,:\, X_t=y\}$ for the hitting time of $\{y\}$ and set
\begin{equation*}
    \Phi(x,y):= \Pp^x\left(X_t =y \quad\text{for some $t>0$}\right)
    = \Ee^x \I_{\{\tau_y<\infty\}}.
\end{equation*}
Let us show that for $X$ any singleton $\{x\}$ is regular for itself. By \eqref{set-e36} and the inequality $\rho_t\leq c t^{-1/\alpha}$, $t\in (0,1]$---this follows from \eqref{set-e33}---we have for $\alpha>1$
 \begin{align*}
     \sup_{x,y\in \real}\int_0^t p_s(x,y)\,ds
     \leq c_1 \int_0^t \rho_s \,ds
     \leq c_2 \int_0^t s^{-1/\alpha}\,ds
     \leq c_3 t^{1-1/\alpha}, t\in (0,1].
\end{align*}
    Thus, any measure of the form $\varpi = c\delta_y$ for $c\geq 0$ and some $y$  belongs to the Kato class $\Kato$ w.r.t.\ $p_t(x,y)$. By Lemma~\ref{lem-dmeas}, any point $y\in \real$ is regular for itself for $X$. Then
\begin{equation*}
    \Phi(y,y)=1,
\end{equation*}
    because $\{\tau_x=0\} = \bigcap_{\epsilon>0} \{X_t=x \text{\ \ for some\ \ } t\in (0,\epsilon)\}$, and by regularity $\Pp^x(\tau_x=0) =1$.

    Let us show that the function $\Phi(\cdot,y)$ is excessive. Denote by $(P_t)_{t\geq 0}$ the semigroup given by the kernel $p_t(x,y)$. Since
\begin{align*}
    \Phi(X_t(\omega),y)
    &= \Pp^{X_t (\omega)}\left(X_s=y \quad\text{for some $s>0$}\right)\qquad \text{for $\Pp^x$-a.a.\ $\omega$}\\
    &= \Pp^x\left(X_{t+s}=y \quad\text{for some $s>0$}\right),
\end{align*}
    we have
$$
    P_t\Phi(\cdot,y)(x)
    = \Ee^x \Phi(X_t,y)
    = \Pp^x\left(X_{t+s}=y \quad\text{for some $s>0$}\right)
    \leq \Phi(x,y),
$$
    and by the dominated convergence theorem  $P_t  \Phi(x,y) \uparrow \Phi(x,y)$ as $t\to 0$. Since $X$ is recurrent, all excessive functions are constant, see \cite[Exercise~10.39]{Sh88}; hence, we get $\Phi(x,y)\equiv 1$ for all $x,y\in \real$.
\end{proof}

\begin{remark}\label{point-rec}
    Let $X^1$ and  $X^2$ be two independent copies of $X$. Then the symmetrized process $\tilde{X}= X^1 -X^2$ is point recurrent.
\end{remark}

Let $\beta$ be the exponent appearing in the upper bound in \eqref{set-e70}.
\begin{lemma}\label{t.3.1}
    Let $X^1$ and $X^2$ be independent copies of $X$, and denote by $Z^\beta$ a symmetric $\beta$-stable L\'evy process in $\real^2$. Let $D$ be a subset of the diagonal  in $\real^2$. If $D$ is polar for  $Z^\beta$, then it is polar for $(X^1, X^2)$.
\end{lemma}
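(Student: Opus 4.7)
The plan is to transfer polarity via a pointwise comparison of $0$-potentials, using the characterization recorded in Remark~\ref{equiv}: a set is polar for a Markov process iff $R_0\varpi$ is unbounded on $\rn$ for every finite non-zero measure $\varpi$ with compact support in the set's closure. Identifying a measure $\varpi$ on $\overline D\subset\{(x,x):x\in\real\}$ with a measure $\mu$ on $\real$ via $\int f\,d\varpi=\int f(y,y)\,\mu(dy)$, the two potentials evaluated at a diagonal point $(x,x)\in\real^2$ become
\begin{equation*}
    R_0^{(X^1,X^2)}\varpi(x,x)=\int_\real\left[\int_0^\infty p_t(x,y)^2\,dt\right]\mu(dy)
    \quad\text{and}\quad
    R_0^{Z^\beta}\varpi(x,x)=c_\beta\int_\real |x-y|^{\beta-2}\,\mu(dy),
\end{equation*}
where the second identity uses the explicit Riesz density $c_\beta\|x-y\|^{\beta-2}$ for the $2$-dimensional symmetric $\beta$-stable process (valid since $\beta<2$). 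It therefore suffices to establish the pointwise bound
\begin{equation*}
    \int_0^\infty p_t(x,y)^2\,dt\ \geq\ c\,|x-y|^{\beta-2},\qquad x,y\in\real,\ |x-y|\ \text{bounded},
\end{equation*}
since this implies $R_0^{(X^1,X^2)}\varpi\geq c'\,R_0^{Z^\beta}\varpi$ along the diagonal, and the unboundedness of the right-hand side (from the hypothesis) then transfers to the left-hand side.

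The pointwise bound is where the heat-kernel estimate \eqref{set-e35} and the upper symbol bound in \eqref{set-e70} combine. From \eqref{set-e35} with $n=1$ and the fact that $f_{\textrm{low}}(z)\geq a_1/2$ for $|z|\leq 1/(2a_2)$, one gets $p_t(x,y)\geq c\rho_t$ whenever $r\rho_t\leq 1/(2a_2)$, where $r:=|x-y|$; monotonicity of $q^*$ converts this into $t\geq 1/q^*\bigl(1/(2a_2 r)\bigr)$. The upper estimate $q(\xi)\leq c_2|\xi|^\beta$ in \eqref{set-e70} yields $q^*(s)\leq Cs^\beta$ for $s\geq 1$, so the condition holds for $t\geq C'r^\beta$ (for $r$ sufficiently small); the same estimate forces $\rho_t\geq c''t^{-1/\beta}$ on $(0,1]$. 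Putting these together,
\begin{equation*}
    \int_0^\infty p_t(x,y)^2\,dt\ \geq\ c^2\int_{C'r^\beta}^{1}\rho_t^2\,dt\ \geq\ c'''\int_{C'r^\beta}^{1}t^{-2/\beta}\,dt\ \asymp\ r^{\beta-2},
\end{equation*}
the last step using $\beta<2$ (so $2/\beta>1$ and the integral blows up polynomially at the lower endpoint). For $r$ in a bounded range away from $0$ the inequality persists with an adjusted constant by joint continuity of $(x,y)\mapsto\int p_t(x,y)^2\,dt$.

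To close, I would verify that unboundedness of $R_0^{Z^\beta}\varpi$ on $\real^2$ already yields unboundedness along the diagonal, so that the diagonal comparison above suffices. The elementary inequality $(x_1-y)^2+(x_2-y)^2\geq 2\bigl((x_1+x_2)/2-y\bigr)^2$, combined with the fact that $t\mapsto t^{(\beta-2)/2}$ is decreasing (since $\beta-2<0$), shows that $R_0^{Z^\beta}\varpi(x_1,x_2)$ is dominated by $R_0^{Z^\beta}\varpi\bigl((x_1+x_2)/2,(x_1+x_2)/2\bigr)$; consequently the diagonal supremum equals the global supremum. The main obstacle is the pointwise lower bound $\int_0^\infty p_t^2\,dt\gtrsim r^{\beta-2}$: it is precisely the \emph{upper} bound on the symbol $q$ in \eqref{set-e70} (which controls $\rho_t$ from below) that pins down $\beta$ as the relevant exponent here, whereas the lower bound $q\gtrsim|\xi|^\alpha$ would produce $r^{\alpha-2}$, explaining why the matching lower bound $\dim A\geq\alpha-1$ in Theorem~\ref{t-col} has to be argued separately.
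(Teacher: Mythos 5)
Your overall route is the same as the paper's: compare the $0$-potential of $(X^1,X^2)$ against the Riesz kernel $|x-y|^{\beta-2}$ of $Z^\beta$ by combining the lower heat-kernel bound \eqref{set-e35} with the estimate $q^*(s)\leq Cs^\beta$ (which follows from \eqref{set-e70} together with \eqref{set-e30}), and then transfer unboundedness of potentials through the characterization of polarity in Remark~\ref{equiv}. Your small-$r$ computation $\int_{C'r^\beta}^{1}\rho_t^2\,dt\geq c\,r^{\beta-2}$ is correct and is just a reparametrization of the paper's change of variables; your additional reduction to diagonal points (the projection inequality giving $R_0^{Z^\beta}\varpi(x_1,x_2)\leq R_0^{Z^\beta}\varpi(m,m)$ with $m=(x_1+x_2)/2$) is also correct, although the paper avoids it by proving the kernel comparison for arbitrary $x\in\real^2$ with $|x-y|\leq\epsilon$ and $y$ on the diagonal.

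The one step that does not hold up as written is the extension of the comparison to $|x-y|$ bounded away from $0$ ``by joint continuity of $(x,y)\mapsto\int_0^\infty p_t(x,y)^2\,dt$''. Assumption A controls $p_t$ only for $t\in(0,1]$, and $X$ is recurrent, so neither finiteness nor continuity of this full $0$-potential kernel is available; moreover, the lower bound \eqref{set-e35} becomes vacuous once $a_2\rho_t|x-y|\geq 1$ for all $t\leq 1$, so even strict positivity at moderate distances would require a Chapman--Kolmogorov chaining argument as in the proof of Corollary~\ref{poten}. The paper sidesteps this entirely with the truncation inequality \eqref{s5-e18}: since $\int_{|x-y|>\epsilon}|x-y|^{\beta-2}\,\varpi(dy)\leq\epsilon^{\beta-2}\varpi(D)$, unboundedness of the stable potential already forces unboundedness of its near-diagonal part, so the comparison is needed only for $|x-y|\leq\epsilon$ --- exactly the range where your estimate is valid. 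Replace the continuity appeal by this truncation (or by the chaining argument) and your proof closes, and is then essentially identical to the paper's.
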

\begin{proof}
    Denote by $\mathfrak{p}_t(x,y)$, $x=(x_1,x_2)$, $y=(y_1,y_2)$, the transition probability density of the bivariate process $(X^1, X^2)$.   Suppose that $|x-y|\leq \epsilon$ for some sufficiently small $\epsilon>0$. Using the lower estimates \eqref{set-e35} for $p_t(x_i,y_i)$, $i=1,2$, we get
\begin{align*}
    \int_0^1 \mathfrak{p}_t(x,y)\,dt
    &\geq a_1^2 \int_0^1  (1-a_2 \rho_t\,|x_1-y_1|)_+ (1-a_2 \rho_t\,|x_2-y_2|)_+ \rho_t^2 \,dt\\
    &\geq a_1^2 (1-a_2)^2 \int^1_{c \phi(|x-y|)} \rho_t^2 \,dt,
\end{align*}
    where $\phi(r):= 1/q^U(1/r)$; with this choice of $\phi(r)$ we have $\rho_t\,|x-y|<1$. Changing variables gives
\begin{align*}
    \int_0^1 \mathfrak{p}_t(x,y)\,dt
    \geq c_1 \int_{|x-y|}^{1/\rho_1} \frac{1}{r^3 q^U(1/r)}\,dr
    \geq c_2 |x-y|^{\beta-2},
\end{align*}
    where we used the definition of $\rho_t$ as inverse of $q^*$ and \eqref{set-e30}, as well as $\big(q^U (r)\big)'= 2 q^L(r)/ r$ a.e.\ and \eqref{set-e70}. The expression in the last line is (up to a constant) the potential of the process  $Z^\beta$. Thus, for $|x-y|<\epsilon$ the potential of $(X^1,X^2)$ is bounded from below by the potential $U(x):= |x|^{\beta-2}$ of $Z^\beta$. Now
\begin{equation}\label{s5-e18}
    \int_{|x-y|>\epsilon} \frac{1}{|x-y|^{2-\beta}}\, \varpi(dy)
    \leq  \epsilon^{\beta-2}\varpi (D)
    \quad\text{for all finite measures $\varpi$.}
\end{equation}
    By Remark~\ref{equiv} the set $D$ is polar for $Z^\beta$ if and only if the potential of  $Z^\beta$ is unbounded for any finite measure $\varpi\neq 0$ with $\supp\varpi\subset\overline{D}$, i.e.
\begin{equation*}
    \sup_x U\varpi(x)
    = \sup_x \int  \frac{1}{|x-y|^{2-\beta}}\,\varpi(dy)
    =\infty.
\end{equation*}
    Because of \eqref{s5-e18} this happens if and only if
\begin{equation}\label{s5-e07}
    \sup_x \int_{|x-y|\leq \epsilon} \frac{1}{ |x-y|^{2-\beta}}\, \varpi(dy)=\infty
\end{equation}
    Thus, if \eqref{s5-e07} holds true, then $\sup_x \mathfrak{R}_0 \varpi(x)=\infty$, where $\mathfrak{R}_0$ is the $0$-resolvent for $(X^1,X^2)$; by  Remark~\ref{equiv} the set $D$ is polar for $(X^1,X^2)$.
\end{proof}

The next lemma is from \cite[Theorem~4]{Ta66}, see also \cite{JP69}, and it plays the key role in the proof of Theorem~\ref{t-col}.
\begin{lemma}\label{l.4.2}
    Suppose that $A$ is an analytic subset of $\rn (n=1,2)$, and $Z_t^{\zeta,n}$ is any symmetric $\zeta$-stable L\'evy process in $\rn$. Then
    $$
        \dim A
        = n-\inf\left\{\zeta\,:\, \text{$A$ is non-polar for $Z^{\zeta,n}$}\right\}.
    $$
\end{lemma}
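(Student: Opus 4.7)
The plan is to identify non-polarity for $Z^{\zeta,n}$ with positive Riesz capacity, and then invoke Frostman's theorem to convert the capacity condition into a statement about Hausdorff dimension. Concretely, I would proceed as follows.

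First, I would compute the $0$-potential kernel of the symmetric $\zeta$-stable process. Since the characteristic exponent is $|\xi|^\zeta$, Fourier inversion of $\int_0^\infty e^{-t|\xi|^\zeta}\,dt = |\xi|^{-\zeta}$ identifies (for $0<\zeta<n$) the resolvent kernel with the Riesz kernel $r_0(x,y) = c_{n,\zeta}|x-y|^{\zeta-n}$. Using the polarity criterion already recorded in Remark~\ref{equiv}, $A$ is polar for $Z^{\zeta,n}$ precisely when, for every nonzero finite measure $\varpi$ with compact support in $\overline{A}$, the $\zeta$-potential $U_\zeta\varpi(x) = c_{n,\zeta}\int|x-y|^{\zeta-n}\,\varpi(dy)$ is unbounded in $x$. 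Standard Riesz potential theory (Evans--Cartan type maximum principle, or the symmetric-kernel argument) shows that some measure on $A$ admits a bounded $\zeta$-potential iff $A$ carries a nontrivial probability measure $\mu$ with finite $(n-\zeta)$-energy
\begin{equation*}
    I_{n-\zeta}(\mu) := \iint\frac{\mu(dx)\,\mu(dy)}{|x-y|^{n-\zeta}} < \infty,
\end{equation*}
i.e.\ iff the Riesz capacity $C_{n-\zeta}(A)$ is strictly positive.

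Next, I would apply Frostman's theorem (and Choquet's capacitability theorem, to reach analytic, not merely Borel, sets). Frostman's theorem states that for any analytic $A\subset\rn$,
\begin{equation*}
    \dim A
    = \sup\{s\in(0,n)\,:\, C_s(A)>0\}
    = \sup\{s\in(0,n)\,:\, A \text{ carries a probability measure with } I_s(\mu)<\infty\}.
\end{equation*}
Setting $s=n-\zeta$ and combining with the equivalence of the previous paragraph gives
\begin{equation*}
    \dim A
    = \sup\{n-\zeta\,:\, A \text{ is non-polar for } Z^{\zeta,n}\}
    = n - \inf\{\zeta\,:\, A \text{ is non-polar for } Z^{\zeta,n}\},
\end{equation*}
which is the claimed identity. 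The monotonicity that makes the $\sup$/$\inf$ correspondence meaningful is the fact that $\zeta\mapsto C_{n-\zeta}(A)$ is non-increasing in $\zeta$, since larger $\zeta$ produces a stronger integrability requirement on $\mu$.

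The most delicate steps are the passage from bounded potential to finite energy (which is the Cartan/Evans lemma) and the Choquet capacitability step used to enlarge Frostman's theorem from Borel to analytic sets; both are classical but not trivial. A minor technical point is the boundary case $\zeta\geq n$ (only relevant when $n=1$ and $\zeta\in[1,2]$): there the resolvent kernel is bounded and every non-empty analytic set is non-polar, so $\inf\{\zeta\}\leq 1$, while $\dim A\leq 1=n$, and the formula remains consistent. In the application of the lemma in Theorem~\ref{t-col} one has $n=2$ and $\zeta\in(0,2)$, so one stays inside the Riesz regime $\zeta<n$ and neither boundary issue arises.
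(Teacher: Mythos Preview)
The paper does not actually prove this lemma; it merely quotes it from \cite[Theorem~4]{Ta66} (with a cross-reference to \cite{JP69}) and then uses it as a black box in the proof of Theorem~\ref{t-col}. Your sketch is essentially Taylor's original argument: identify non-polarity for $Z^{\zeta,n}$ with positive Riesz $(n-\zeta)$-capacity via the explicit potential kernel $c_{n,\zeta}|x-y|^{\zeta-n}$, and then invoke Frostman's theorem (extended to analytic sets via Choquet capacitability) to convert the capacity threshold into the Hausdorff dimension. The monotonicity in $\zeta$ and the handling of the boundary regime $\zeta\geq n$ are also as in the classical treatment. So your proposal is correct and, in fact, supplies what the paper omits; there is nothing to compare beyond noting that the authors chose to cite rather than reprove a standard result.
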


\medskip
\begin{proof}[Proof of Theorem~\ref{t-col}]
    Let $A(\omega)$ be the collision set defined in \eqref{set-e65}. Since the one-dimensional process $X^1- X^2$ is point recurrent, cf.\ Remark~\ref{point-rec}, the set $A(\omega)$ is a.s.\ non-empty. Instead of $A(\omega)$ we consider the following set on the diagonal of $\real^2$:
\begin{align*}
    \hat{A}(\omega)
    := &\left\{(x,x)\in\real^2\,:\,(X^1_t(\omega),X_t^2(\omega))=(x,x)\quad \text{for some $t>0$}\right\}\\
    \equiv &\left\{(x,x)\in\real^2\,:\, \tau^x(\omega)<\infty\right\},
\end{align*}
    where $\tau^x:= \inf\{t>0\,:\,(X^1_t,X_t^2)=(x,x)\}$. There is a one-to-one correspondence between $\hat{A}(\omega)$ and $A(\omega)$, and their Hausdorff dimensions coincide. For our needs it is more convenient to work with the set $\hat{A}(\omega)$.

    Define on a further probability space $(\Omega', \mathcal{F}', \Pp')$ a symmetric $\theta$-stable L\'evy process $Z^{\theta,1}_t (\omega')$, $t\geq 0$, taking values on the diagonal of $\real^2$ and with $\theta<2-\beta$. We are going to show that
\begin{equation}\label{s5-e30}
    \Pp'\left(\omega'\,:\, Z^{\theta,1}_t(\omega')\in \hat{A}(\omega)\quad\text{for some $t>0$}\right) = 0
\end{equation}
    for almost all $\omega$; this means that $\hat{A}(\omega)$ is a.s.\ polar for $Z^{\theta,1}(\omega')$.

    Let
$$
    \Gamma :=
    \left\{(\omega,\omega')\,:\, Z_t^{\theta,1} (\omega') \in \hat{A}(\omega) \quad \text{for some $t>0$}\right\}.
$$
    Then, by the definition of $\hat{A}(\omega)$,
$$
    \Gamma =
    \left\{(\omega,\omega') \,:\, (X_t^1(\omega),X_t^2(\omega))=(x,x)\in \hat{B}(\omega') \quad\text{for some $t>0$}\right\},
$$
    where $\hat{B}(\omega'):= \textrm{Range}\,Z_t^{\theta,1}(\omega')$. In \cite{BG60} it is shown that $\dim \hat{B}(\omega')=\theta$; by Lemma~\ref{l.4.2} we get
$$
    2 - \inf\left\{\zeta>0\,:\, \text{$\hat{B}(\omega')$ is non-polar for $Z^{\zeta,2}$}\right\}
    = \dim \hat{B}(\omega')
    = \theta < 2-\beta,
$$
    and so
$$
    \beta
    < \inf\left\{\zeta>0 \,:\, \text{$\hat{B}(\omega')$ is non-polar for $Z^{\zeta,2}$}\right\}.
$$
    Thus, the set $\hat{B}(\omega')$ is for almost all $\omega'$  polar for the process $Z_t^{\beta,2}$. By Lemma~\ref{t.3.1} the set $\hat{B}(\omega')$ is polar for $(X_t^1(\omega),X_t^2(\omega))$ for almost all $\omega'$. By Fubini's theorem  we have $\Pp\otimes\Pp' (\Gamma)=0$; therefore, \eqref{s5-e30} holds true, showing that $\hat{A}(\omega)$ is polar for $Z^{\theta, 1}$ for all $\theta<2-\beta$. Thus, by Lemma~\ref{l.4.2}
$$
    \dim \hat{A}(\omega)
    = 1-\inf\left\{\theta>0\,:\, \text{$\hat{A}(\omega)$ is non-polar for $Z^{\theta, 1}$}\right\}
    \leq 1- (2-\beta)
    = \beta-1.
$$

    Next, we are going to show that $\dim\hat{A}(\omega) \geq \alpha-1$. Choose $\theta\in (2-\alpha, 2)$, and let $Z^{\theta,1}$  be a symmetric $\theta$-stable L\'evy process on the diagonal in $\real^2$. Denote by $\hat{B}(\omega')$ its range; by \cite{BG60}, $\dim \hat{B}(\omega')=\theta$. By  Frostman's lemma, cf.\ e.g.\ \cite[p.~387, Theorem A.44]{sch-bm}, there exists a measure $\varpi$ on $\hat{B}(\omega')\cap K$ ($K$ is a compact subset of the diagonal in $\real^2$) such that
\begin{equation}\label{s5-e35}
        \varpi\big(B(z,r)\big)
        \leq Cr^{\theta-\epsilon},
        \quad z\in \hat{B}(\omega'), \; r>0.
\end{equation}
    Denote by $\mathfrak{p}_t(x,y)$ the transition probability density of $(X^1_t,X^2_t)$.  A direct calculation shows (cf.\ \eqref{app-e20} in the appendix for details of the first estimate) that \eqref{s5-e35} implies
\begin{align*}
    \int_0^1 \int_{\hat{B}(\omega')\cap K} \mathfrak{p}_t(x,y)\,\varpi(dy)\,dt
    &\leq c_1 \int_0^1 \int_0^\infty \rho_t^2  \sup_{x\in \real^2} \varpi\{y\,:\, |x-y|\leq c_2 r/\rho_t\}\, e^{-r}\,dr\,dt\\
    &\leq c_3 \int_0^1 \rho_t^{2-\theta+\epsilon}\,dt\\
    &\leq c_4 \int_0^1 t^{-(\theta-2+\epsilon)/\alpha}\,dt < \infty,
\end{align*}
which shows that $\varpi\in \Kato$ w.r.t.\ $\mathfrak{p}_t(x,y)$. Hence, by Corollary~\ref{poten} the set $\hat{B}(\omega')$ is non-polar for $(X^1,X^2)$.

By $\Pp^{(z,z)}$ we indicate that the starting point of the process $(X^1,X^2)$ is $(z,z)$.  For all $(z,z)\in \real^2$
$$
    \Pp^{(z,z)}\otimes \Pp'\left( (\omega,\omega') \,:\, (X_t^1(\omega),X_t^2(\omega))=(x,x)\in \hat{B}(\omega') \quad\text{for some $t>0$}\right) > 0.
$$
By Fubini's theorem, there is a set $F\in \mathcal{F}$ with $\Pp^{(z,z)}(F)>0$ such that
\begin{equation*}
    \forall\omega\in F\::\: \Pp'\left(\omega' \,:\, Z_t^{\theta,1}(\omega') \in \hat{A}(\omega)\quad \text{for some $t>0$}\right) > 0.
\end{equation*}
Let us show that there exists an open subset $\mathcal{O}$ of the diagonal in $\real^2$ such that
\begin{equation}\label{s5-e45}
    \inf_{(z,z)\in \mathcal{O}} \Pp^{(z,z)}(F)
    \geq \delta > 0.
\end{equation}
Indeed, for any $s>0$ we have
\begin{align*}
    \Ee^{(z,z)} \Ee'\I_{\{\exists t>0\,:\, (X_t^1,X_t^2)\in \hat{B}(\omega')\}}
    &\geq \Ee^{(z,z)} \Ee'\I_{\{\exists t>s\,:\, (X_t^1,X_t^2)\in \hat{B}(\omega')\}}\\
    &= \Ee^{(z,z)} \left(\Ee^{(X_s^1,X_s^2)} \Ee'\I_{\{\exists t>s\,:\, (X_{t-s}^1,X_{t-s}^2)\in \hat{B}(\omega')\}}\right)\\
    &= \Ee^{(z,z)} \left(\Ee^{(X_0^1,X_0^2)} \Ee'\I_{\{\exists t>s\,:\, (X_{t-s}^1\circ \theta_s,X_{t-s}^2\circ\theta_s)\in \hat{B}(\omega')\}}\right)\\
    &=\Ee^{(z,z)} \left(\Ee'\I_{\{\exists r>0\,:\, (X_{r}^1\circ \theta_s,X_{r}^2\circ\theta_s)\in \hat{B}(\omega')\}}\right).
\end{align*}
Denote by $\theta_s^{-1} F = \{\omega\,:\, \theta_s\omega\in F\}$ the shift of the set $F$. Clearly, $\theta_s^{-1}\{Y\in \Gamma\} = \{Y\circ\theta_s\in\Gamma\}$ for any random variable $Y(\omega)$ and a set $\Gamma$ in the state space of $Y$, and so
$$
    \Pp^{(z,z)}(F)\geq \Pp^{(z,z)}(\theta_s^{-1}F).
$$
This inequality implies that $\Pp^{(z,z)}(F)$ is excessive:
\begin{align*}
    P_t \Pp^{(z,z)}(F)
    = \Ee^{(z,z)} \Pp^{(X_t,X_t)}(F)
    &= \Ee^{(z,z)} \left( \Ee^{(z,z)} \left[\I_{\theta_t^{-1} F} \:\middle|\:\mathcal{F}_t\right] \right)\\
    &= \Ee^{(z,z)} \I_{\theta_t^{-1} F}
    = \Pp^{(z,z)}(\theta_t^{-1} F)
    \leq \Pp^{(z,z)}(F),
\end{align*}
and, by the dominated convergence theorem, $P_t \Pp^{(z,z)}(F) \uparrow \Pp^{(z,z)}(F)$ as $t\to 0$.  Assumption~A implies that the process $X$ is a strong Feller process, cf.\ Section~\ref{set}, which means that any excessive function is lower semicontinuous, see \cite[p.~77, Exercise~2.16]{BG68}.  Since $z\mapsto \Pp^{(z,z)}(F)$ is lower semi-continuous we get \eqref{s5-e45}.

Fix $\epsilon>0$, and define
$$
    \tau_1:= \inf\{ t>\epsilon\,:\,X^1_t=X_t^2 \}, \quad
    \tau_1^x := \inf\{ t>\epsilon: \quad X^1_t=X^2_t=x\}, \quad x\in\real,
$$
and
$$
    \hat{A}_1(\omega):= \left\{(x,x)\in\real^2\,:\,\tau_1^x(\omega) <\infty\right\}.
$$
We have
$$
    \Pp'\left(\omega'\,:\, Z_t^{\theta,1} (\omega')\in \hat{A}_1(\omega)\quad\text{for some $t\in (0,\tau_1]$}\right)
    > 0,  \quad \forall\omega\in F.
$$
Thus,
$$
    \inf\left\{\zeta\,:\, \hat{A}_1(\omega) \quad \text{is non-polar for $Z^{\zeta,1}$}\right\}
    \leq 2-\alpha,
$$
which implies by Lemma~\ref{l.4.2}  that
$$
    \dim \hat{A}_1(\omega)
    = 1 - \inf\left\{\zeta \,:\, \hat{A}_1(\omega) \quad\text{is non-polar for $Z^{\zeta,1}$}\right\}
    \geq 1 - (2-\alpha)
    = \alpha-1,\quad\forall\omega\in F.
$$

For $(z,z)\in \mathcal{O}$ we get
\begin{equation}\label{s5-e50}
    \Pp^{(z,z)}\left(\dim\hat{A}_1(\cdot)\geq \alpha-1\right)
    \geq \Pp^{(z,z)}(F)\geq \delta>0.
\end{equation}

Let us now show that
$$
    \dim \hat{A}(\omega)
    \geq \alpha-1\quad \Pp^{(z,z)}\text{-a.e.\ for all $z\in \real$}.
$$
Let $\tau_0(\omega)=0$ and define
$$
    \tau_n(\omega)
    := \inf\left\{ t>\tau_{n-1}(\omega) + \epsilon \,:\, (X^1_t(\omega),X^2_t(\omega))=(x,x)\in K\right\},
$$
where $K$ is as above. Since the process $X^1-X^2$ is point recurrent, the stopping times $\tau_n$ are almost surely finite. Define $G_1(\omega):= \dim \hat{A}_1(\omega)$, and
$$
    G_n(\omega):=
    \dim\left\{(x,x)\in\real^2 \,:\, X^1_t = X^2_t = x\quad\text{for some $t\in (\tau_{n-1}(\omega), \tau_{n}(\omega)]$}\right\}, \quad n\geq 2.
$$
Note that $\dim \hat{A}(\omega)\geq \sup_n G_n(\omega)$. Using the Markov property and \eqref{s5-e50} we get
\begin{align*}
    \Pp^{(z,z)}\left(\dim \hat{ A} <1-\theta\right)
    &\leq \Pp^{(z,z)}\left(\sup_n G_n<1-\theta\right)\\
    &\leq \Pp^{(z,z)}\left(\max_{i\leq n} G_i  <1-\theta\right)\\
    &= \Ee^{(z,z)}\left(\I_{\{ \max_{1\leq i\leq n-1} G_i<1-\theta\}} \vphantom{\Ee^{(X_{\tau_{n-1}}^1, X_{\tau_{n-1}}^2)}}\Ee^{(z,z)}\left[ \I_{\{ G_n<1-\theta\}} \:\middle|\: \mathcal{F}_{\tau_{n-1}} \right] \right)\\
    &= \Ee^{(z,z)}\left(\I_{\{ \max_{1\leq i\leq n-1} G_i <1-\theta\}} \Ee^{(X_{\tau_{n-1}}^1, X_{\tau_{n-1}}^2)}\left[\I_{\{ G_1 <1-\theta\}}\right] \right)\\
    &\leq (1-\delta) \Ee^{(z,z)}\left(\I_{\{ \max_{1\leq i\leq n-1} G_i <1-\theta\}} \right)\\
    &\leq (1-\delta)^n
\end{align*}
for all $n\geq 1$.  Therefore, $\dim A(\omega)\geq 1-\theta$ $\Pp$-a.s. Letting $\theta\downarrow 2-\alpha$ along a countable sequence, the claim follows.
\end{proof}

\section{Examples: Recurrent processes satisfying Assumption~A}\label{rec}

In this section we give examples of processes $X$ which satisfy Assumption~A and which are recurrent. For simplicity, we will assume that the space dimension $n=1$.

\begin{example}\label{exa1}
Let
\begin{equation*}
    j(x,u):= \big( n(x,u)+n(u,x) \big) \mathfrak{g}(x-u),
\end{equation*}
where the function $n(x,u)$ is strictly  positive, uniformly bounded and H\"older continuous in both variables,  and  $\mathfrak{g}$  satisfies \eqref{set-e38}. In particular, \eqref{set-e38} holds true for $\mu(du)=\mathfrak{g}(u)du$, if

\begin{enumerate}
\item[a)]
    $\mathfrak{g}$ is even and $\mathfrak{g}(h)\leq C |h|^{-1-\eta}$ for some  $\eta>1$ and all $|u|\geq 1$;
\item[ b)]
    There exists some $\epsilon\in (0,1)$ such that $h^{2+\epsilon}\mathfrak{g}(h)$ is increasing on $(0,1]$;
\item[c)]
    There exits some $\delta>1$ such that the function $h^\delta \mathfrak{g}(h) $ is decreasing on $(0,1]$.
\end{enumerate}
Let us check \eqref{set-e38}. As in Section~\ref{set} we write
\begin{equation*}
    q(\xi)
    = \int_{\real\setminus\{0\}} \big( 1- \cos(\xi h) \big) \mathfrak{g}(h)\,dh
\end{equation*}
and we define the corresponding upper and lower symbols $q^U(\xi)$ and $q^L(\xi)$.  The conditions a)--c)   imply \eqref{set-e30}.  Indeed, by b) we have for $|\xi|\geq 1$
$$
    q^L(\xi)
    = \frac{2}{\xi}  \int_0^{1} v^2 \mathfrak{g}\left(\tfrac v\xi\right)\,dv
    = \frac{2}{\xi}  \int_0^{1} v^{-\epsilon} v^{2+\epsilon} \mathfrak{g}\left(\tfrac v\xi\right)\,dv
    \leq \frac{c_1}{\xi}\mathfrak{g}\left( \tfrac{1}{\xi}\right),
$$
and by c) we get
\begin{gather*}
    q^{L}(\xi)
    = \frac{2}{\xi} \int_0^{1} v^2 \mathfrak{g}\left(\tfrac v\xi\right)\,dv
    \geq \frac{2}{\xi} \int_0^{1} v^{2-\delta} v^\delta \mathfrak{g}\left(\tfrac v\xi\right)\,dv
    \geq  \frac{c_2}{\xi}\mathfrak{g}\left( \tfrac{1}{\xi}\right),\\
    \int_{1/\xi}^1 \mathfrak{g}(h)\,dh
    = \int_{1/\xi}^1 h^{-\delta} h^{\delta} \mathfrak{g}(h)\,dh
    \leq \frac{c_3}{\xi}\mathfrak{g}\left( \tfrac{1}{\xi}\right),
\end{gather*}
which implies \eqref{set-e30}. Since $(1-\cos 1) q^L(\xi) \leq q(\xi) \leq 2 q^U(\xi)$, we have  $q(\xi)\asymp |\xi|^{-1}\mathfrak{g}\left(|\xi|^{-1}\right)$ for large $|\xi|$.

Note that the estimate a) gives
\begin{equation*}
\begin{split}
    q(\xi)
    &= \int_{|h|\leq 1} (1-\cos \xi h)\,\mathfrak{g}(h)\,dh + \int_{|h|\geq 1} (1-\cos \xi h)\mathfrak{g}(h)\,dh\\
    &\leq c_1\xi^2 + c_2 |\xi|^\eta \int_{\xi}^\infty (1-\cos v)\frac{dv}{v^{1+\eta}}\\
    &\leq c_3 |\xi|^{2\wedge \eta},  \quad  \quad |\xi|\leq 1.
\end{split}
\end{equation*}
Assume also that
\begin{equation*}
    \int_{|h|\leq 1}  |h|   \big| j(x,x+h)- j(x,x-h)\big| \,dh <\infty.
\end{equation*}
Then the function
\begin{equation*}
    k(x)
    := \frac{1}{2} \int_{|h|\leq 1}  \big( j(x,x+h)- j(x,x-h)\big) h\, dh
\end{equation*}
is well-defined.

Consider the operator $\mathcal{L}$ defined by \eqref{set-e05} with $a(x)=k(x)$, and
\begin{equation*}
    m(x,h)\,\mu(dh)
    = j(x,x+h)\, dh
    = \big( n(x,x+h)+ n(x+h,x)\big) \mathfrak{g}(h)\,dh.
\end{equation*}
Then
\begin{equation*}
    \mathcal{L} f(x) =  \int_{\real\setminus\{0\}} \big(f(x+h)-f(x)\big) j(x,x+h)\, dh,
\end{equation*}
which is a symmetric operator, and  generates a (symmetric) Dirichlet form
$$
    \mathcal{E}(\phi,\phi)
    =\frac{1}{2} \int_{\real\setminus\{0\}} \int_\real (\phi(x+h)-\phi(x))^2\, j(x,x+h)\,dx\,dh.
$$
The form $\mathcal{E}(\cdot,\cdot)$ is comparable with the Dirichlet form $\mathcal E^q(\cdot,\cdot)$ corresponding to the L\'evy process $Z$ with characteristic exponent $q$:
\begin{align*}
    \mathcal{E}(\phi,\phi)
    &\asymp \mathcal{E}^{q}(\phi,\phi)
    := \int_\real q(\xi) |\hat{\phi}(\xi)|^2d\xi
    = \frac 12\int_\real \int_{\real\setminus\{0\}}  \big(\phi(x+h)-\phi(x)\big)^2 \mathfrak{g}(h)\,dh\, dx.
\end{align*}
The  Dirichlet form $\mathcal{E}^{q}$ is recurrent, because the related L\'evy process $Z$ is recurrent by the Chung-Fuchs criterion, i.e.\  $\int_{|\xi|\leq 1} q(\xi)^{-1}\,d\xi =\infty$. By Oshima's criterion, cf.\ \cite{Osh92}, the form $\mathcal{E}$ is also recurrent implying the recurrence of the related process $X$.
\end{example}

\begin{example}\label{exa2}
    Let $\mathcal{L}$ be the generator defined by \eqref{set-e05}. In order to construct an example of a non-symmetric recurrent Markov process satisfying Assumption~A, we use the approach from \cite{W08}. Note that our Assumption~A implies the condition (H) needed in \cite{W08}. According to \cite[Theorem~1.4]{W08} the following condition is sufficient for the recurrence of the process $X$:
    \begin{equation}\label{rec-e70}
        B(x)x + D(x)|x|\leq C \quad \text{for sufficiently large $|x|$,}
    \end{equation}
    where $B(x):= b(x)+ \int_{1<|z|\leq |x|} z m(x,z)\,\mu(dz)$ and $D(x):= \int_{|z|\geq |x|} |z| m(x,z)\,\mu(dz)$, with $b(x)$ and $m(x,u)$ from the representation of $\mathcal{L}$ in \eqref{set-e05}.  Thus, by  \cite[Theorem~1.4]{W08}, the process $X$ which corresponds to \eqref{set-e05} is recurrent, if \eqref{rec-e70} holds true.
\end{example}

\section{Appendix}\label{app}

\subsection*{Proof of  Lemma~\ref{lem-dmeas}}
Without loss of generality we may assume that $D$ is a closed set.
We begin with the upper bound for
$$
    R_\lambda\varpi(x)
    = \int_0^1 \int_{D} e^{-\lambda t} p_t(x,y) \,\varpi(dy)\,dt + \int_1^\infty \int_{D} e^{-\lambda t} p_t(x,y)\,\varpi(dy)\,dt.
$$
The upper estimate for the second term can be proved in the same way as  \cite[(3.3)]{KT06}: for any $x\in D$ and $\lambda>0$ one finds that
\begin{align*}
    \int_1^\infty \int_D e^{-\lambda t} p_t(x,y)\,\varpi(dy)\,dt
    \leq \frac{e^{-\lambda }}{1-e^{-\lambda }}   \sup_{x\in D} \int_0^1 \int_D p_s(x,y)\,\varpi(dy)\,ds,
\end{align*}
where we used in the last line that the integral on the right-hand side is finite since $\varpi\in\Kato$.
Therefore,
\begin{equation*}
    \int_0^1 \int_D e^{-\lambda t} p_t(x,y)\,\varpi(dy)\,dt
    \leq R_\lambda \varpi(x)
    \leq \left( 1+ \frac{e^{-\lambda}}{1-e^{-\lambda}}\right)\sup_{x\in D} \int_0^1 \int_D e^{-\lambda t} p_t(x,y) \,\varpi(dy)\,dt.
\end{equation*}

Using the upper and lower bounds \eqref{set-e35}, \eqref{set-e36} for the heat kernel,  we obtain by a change of variables
\begin{align*}
    \int_0^1 \int_D &e^{-\lambda t} p_t(x,y)\,\varpi(dy)\,dt \\
    &\geq  a_1 \int_0^1 \int_D e^{-\lambda t}\rho_t^n (1-a_2 |x-y|\rho_t)_+ \,\varpi(dy) \,dt\\
    &= a_1 \int_0^1 \int_0^1 e^{-\lambda t} \rho_t^n \,\varpi\left\{y\in D\,:\, (1-a_2 |x-y|\rho_t)_+ \geq 1-r\right\} \,dr \,dt\\
    &= a_1 \int_0^1 \int_0^1 e^{-\lambda t}\rho_t^n \,\varpi\left\{y\in D\,:\, a_2 |x-y|\rho_t\leq  r\right\} \,dr \,dt.
\end{align*}
Using the lower bound in \eqref{set-e45} for the $d$-measure $\varpi$, we get for $x\in D$
\begin{equation*}
    \int_0^1 \int_D e^{-\lambda t} p_t(x,y)\,\varpi(dy)\,dt
    \geq  c_1 a_2^{-d} \int_0^1 \int_0^1 \rho_t^{n-d} e^{-\lambda t} r^d\, dt\,  dr
    = c  \lambda^{-1} \int_0^\lambda e^{-u} \rho_{u/\lambda}^{n-d}\,du.
\end{equation*}
Similarly, we have with the upper bound \eqref{set-e36},
\begin{equation}\label{app-e20}
\begin{split}
    \int_0^1 \int_D &e^{-\lambda t} p_t(x,y) \,\varpi(dy) \,dt\\
    &\leq a_3 \int_0^1 \int_\rn \int_D e^{-\lambda t}\rho_t^n e^{-a_4|x-y-z|\rho_t}\,\varpi(dy) \,Q_t(dz) \,dt\\
    &\leq a_3 \int_0^1 \int_\rn \int_0^\infty e^{-\lambda t} \rho_t^n \,\varpi\left\{y\in D\,:\, a_4 |x-y-z|\rho_t\leq r\right\} e^{-r} \,dr \,Q_t(dz) \,dt\\
    &\leq  c_4  \int_0^1 \int_0^\infty e^{-t\lambda} \rho_t^n\, \sup_{w\in \rn} \varpi\left\{ y\in D\,:\, a_4|w-y|\rho_t \leq r\right\} e^{-r} \,dr\, dt\\
    &\leq c_5 \int_0^1  e^{-t\lambda} \rho_t^{n-d}  \,dt \cdot \int_0^\infty r^d  e^{-r} \,dr\\
    &= c_6  \lambda^{-1} \int_0^\lambda e^{-u} \rho_{u/\lambda}^{n-d}\,du.
\end{split}
\end{equation}
Here we used the upper bound \eqref{set-e45} for small $r$, and the fact that $\supp\varpi=D$, which implies $\sup_x \varpi(B(x,r))\leq C$ for large  $r$. This proves that $\sup_y R_\lambda \varpi(y)<\infty$.

Therefore, we see
\begin{equation*}
    \liminf_{\lambda \to\infty} \frac{R_\lambda \varpi(x)}{\sup_{y\in D } R_\lambda \varpi(y)}
    \geq \liminf_{\lambda \to\infty} \frac{R_\lambda \varpi(x)}{\sup_{y\in \rn} R_\lambda \varpi (y)}
    \geq  c>0,
\end{equation*}
and by Lemma~\ref{lem-reg} all points of $D$ are regular.
\qed

\subsection*{Proof of Lemma~\ref{lem-SK}}

The case  $\gamma=1$ is already contained in \cite{KK13b}. Therefore, we consider only $\gamma\in (0,1)$. Without loss of generality we may assume that $D$ is closed.

\medskip
\textbf{a)}
Under our assumptions the transition density $p_t(x,y)$ of the process $X$ satisfies \eqref{set-e35} and \eqref{set-e36} for $t\in (0,1]$.  Using \eqref{set-e36} and the scaling property of the subordinator \eqref{s4-e07}, we get for any $T\in (0,1]$
\begin{align*}
    \int_0^T \int_D  &p^{(\gamma)}_t(x,y) \,\varpi(dy) \,dt\\
    & \leq C \int_0^T  \int_D  \int_0^1 \rho_s^n \big(f_{\textrm{up}} (\rho_s  \cdot) *Q_s \big)(x-y) t^{-1/\gamma} \sigma_1^{(\gamma)}(t^{-1/\gamma} s) \,ds \,\varpi(dy) \,dt\\
    &\qquad\mbox{} + \int_0^T  \int_D  \int_1^\infty p_s(x,y) t^{-1/\gamma} \sigma_1^{(\gamma)}(t^{-1/\gamma} s) \,ds \,\varpi(dy) \,dt\\
    &=: C I_1(x,T) + I_2(x,T).
\end{align*}
We estimate $I_1(x,T)$ and $I_2(x,T)$ separately. For $I_2(x,T)$ we have
\begin{align*}
    I_2(x,T)
    &= \int_0^T \int_{t^{-1/\gamma}}^\infty  \int_D p_{\tau t^{1/\gamma}}(x,y) \sigma_1^{(\gamma)}(\tau) \,\varpi(dy) \,d\tau \,dt\\
    &= \int_{T^{-1/\gamma}}^\infty \int_{\tau^{-\gamma}}^T  \int_D p_{\tau t^{1/\gamma}}(x,y) \sigma_1^{(\gamma)}(\tau) \,\varpi(dy) \,dt \,d\tau\\
    &= \int_{T^{-1/\gamma}}^\infty \int_{1}^{\tau^\gamma T}  \int_D p_{v^{1/\gamma}}(x,y) \tau^{-\gamma}\sigma_1^{(\gamma)}(\tau) \,\varpi(dy) \,dv \,d\tau.
\end{align*}
Note that $\sup_{x,y\in \rn} p_t(x,y)\leq c$ for all $t\geq 1$. Indeed, since for $0<\epsilon <1$ we have $p_\epsilon (x,y)\leq C_\epsilon$ for all $x,y\in \rn$, the Chapman--Kolmogorov relation implies
$$
    p_t(x,y)=\int_\rn p_{t-\epsilon} (x,z) p_\epsilon(z,y) \,dz
    \leq C_\epsilon.
$$
Therefore,
\begin{align*}
    \sup_{x\in \rn} I_2(x,T)
    &\leq c_1 \int_{T^{-1/\gamma}}^\infty (\tau^\gamma T-1) \varpi(D)\tau^{-\gamma}\sigma_1^{(\gamma)}(\tau) \,d\tau\\
    &\leq c_1 \varpi(D) T \int_{T^{-1/\gamma}}^\infty \sigma_1^{(\gamma)}(\tau) \,d\tau
     \leq c_1 \varpi(D) T\xrightarrow[T\to 0]{} 0,
\end{align*}
where we used that $\int_0^\infty \sigma_1^{(\gamma)}(\tau)\,d\tau=1$.

For the first integral expression $I_1(x,T)$ we have
\begin{align*}
    I_1(x,T)
    &= \int_0^T \int_0^{t^{-1/\gamma}} \int_D \rho_{\tau t^{1/\gamma}}^n \big(f_{\textrm{up}} (\rho_{\tau t^{1/\gamma}}  \cdot) *Q_{\tau t^{1/\gamma}}  \big)(x-y) \sigma_1^{(\gamma)}(\tau) \,\varpi(dy) \,d\tau \,dt\\
    &= \int_0^{T^{-1/\gamma}}\int_0^T \int_D  \rho_{\tau t^{1/\gamma}}^n \big(f_{\textrm{up}} (\rho_{\tau t^{1/\gamma}}  \cdot) *Q_{\tau t^{1/\gamma}}  \big)(x-y)\sigma_1^{(\gamma)}(\tau) \,\varpi(dy) \,dt  \,d\tau\\
    &\qquad \mbox{}+ \int_{T^{-1/\gamma}}^\infty \int_0^{\tau^{-\gamma}}\int_D  \rho_{\tau t^{1/\gamma}}^n \big(f_{\textrm{up}} (\rho_{\tau t^{1/\gamma}}  \cdot) *Q_{\tau t^{1/\gamma}}  \big)(x-y) \sigma_1^{(\gamma)}(\tau) \,\varpi(dy) \,dt  \,d\tau\\
    &= \int_0^{T^{-1/\gamma}}\int_0^{\tau^\gamma T} \int_D  \rho_{v^{1/\gamma}}^n \big(f_{\textrm{up}} (\rho_{v^{1/\gamma}}  \cdot) *Q_{v^{1/\gamma}}  \big)(x-y)\tau^{-\gamma}\sigma_1^{(\gamma)}(\tau) \,\varpi(dy) \,dv  \,d\tau \\
    &\qquad \mbox{}+ \int_{T^{-1/\gamma}}^\infty \int_0^{1}\int_D  \rho_{v^{1/\gamma}}^n \big(f_{\textrm{up}} (\rho_{v^{1/\gamma}}  \cdot) *Q_{v^{1/\gamma}}  \big)(x-y) \tau^{-\gamma}\sigma_1^{(\gamma)}(\tau) \,\varpi(dy) \,dv  \,d\tau\\
    &=: I_{11}(x,T)+ I_{12}(x,T).
\end{align*}
For $I_{12}(x,D)$ we have
$$
    I_{12}(x,T)
    =\left[\int_{T^{-1/\gamma}}^\infty  \tau^{-\gamma}\sigma_1^{(\gamma)}(\tau) \,d\tau \right]
      \int_0^{1}\int_D  \rho_{v^{1/\gamma}}^n \big(f_{\textrm{up}} (\rho_{v^{1/\gamma}}  \cdot) *Q_{v^{1/\gamma}}  \big)(x-y) \,\varpi(dy) \,dv.
$$
Since $\lim_{T\to 0}\int_{T^{-1/\gamma}}^\infty  \tau^{-\gamma}\sigma_1^{(\gamma)}(\tau) \,d\tau = 0$, we get $\lim_{T\to 0} I_{12}(x,T) = 0$, if we can show that
\begin{equation}\label{app-e40}
    \sup_{x\in \rn} \int_0^{1}\int_D  \rho_{v^{1/\gamma}}^n \big(f_{\textrm{up}} (\rho_{v^{1/\gamma}}  \cdot) *Q_{v^{1/\gamma}}  \big)(x-y) \,\varpi(dy) \,dv
    < \infty
    \quad\text{for some $\gamma\in (\gamma_{\inf},1)$.}
 \end{equation}
Set $\ell := e_1 = (1,0,\dots 0)^\top$ and $\theta_t:= \inf\left\{ r\,:\, q^U(r\ell)\geq 1/t\right\}$. Because of \eqref{set-e30} we have  $\theta_t\asymp \rho_t$  for all $t\in (0,1]$.  Moreover, the mapping $r\mapsto q^U(r\ell)$ is absolutely continuous, and we have, cf.\ \cite{K13},
\begin{equation}\label{app-e45}
    q^U(r_2 \ell) - q^U(r_1 \ell)
    = 2\int_{r_1}^{r_2} \frac{q^L(v\ell)}{v}\,dv,
    \qquad 0<r_1<r_2.
\end{equation}
Thus, the above calculations  give
\begin{equation}\label{app-e50}
\begin{split}
    &\sup_{x\in \rn} \int_0^1\int_D \rho_{v^{1/\gamma}}^n \cdot \big(f_{\textrm{up}} (\rho_{v^{1/\gamma}}  \cdot) *Q_{v^{1/\gamma}}  \big)(x-y)\,\varpi(dy)\,dv \\
    &\leq c_1 \sup_{x\in \rn} \int_0^1 \int_D \theta_{v^{1/\gamma}}^n \cdot \big(f_{\textrm{up}} (c_2\theta_{v^{1/\gamma}} \cdot) * Q_{v^{1/\gamma}}\big)(x-y) \,\varpi(dy)\,dv\\
    &= c_1 a_3 \sup_{x\in \rn} \int_0^1 \int_\rn  \int_0^\infty  \theta_{v^{1/\gamma}}^n  \cdot \varpi \left\{ y\in D\,:\, e^{-c_2 a_4 |x-y-z| \theta_{v^{1/\gamma}} } \geq s\right\} \,ds\,Q_{v^{1/\gamma}}(dz)\,dv\\
    &= c_1 c_2 a_3 a_4 \sup_{x\in \rn} \int_0^1 \! \int_\rn \! \int_0^\infty  \theta_{v^{1/\gamma}}^n \!\cdot\! \varpi\left\{ y\in D : |x-y-z| \theta_{v^{1/\gamma}} \leq r\right\}  e^{-c_2 a_4 r} \,dr\,Q_{v^{1/\gamma}}(dz)\,dv\\
    &= C_1 \int_0^\infty \int_0^1 \int_\rn \sup_{x\in\rn} \frac{\varpi\left\{ y\in D \,:\, |x-y-z|\leq ur\right\}}{(q^U)^{\gamma}(\ell u^{-1})}  e^{-C_2 r} \,\tilde{Q}_{u}(dz)\,\frac{du}{u^{n+1}} \,dr\\
    &\leq  \kappa C_1  \int_0^\infty \int_0^1   \sup_{\xi\in \rn} \frac{\varpi\left\{ y\in D\,:\, |\xi-y|\leq ur\right\}}{(q^*)^{\gamma}(1/u)}  e^{-C_2 r} \,\frac{du}{u^{n+1}}\,dr,
\end{split}
\end{equation}
where
$\tilde{Q}_u(dz)=Q_{v^{1/\gamma}}(dz)$ under the change of variables $v= (q^U)^{-\gamma}(\ell u^{-1})$, which was done in the second line from below. Note that $\frac{1}{t}= q^U (\theta_t \ell)$, and that by \eqref{app-e45} and $q^U\asymp q^L$, one has
$$
    \frac{dv}{du}\asymp (q^U)^{-\gamma} (\ell u^{-1} ) u^{-1}.
$$
The constant $\kappa$ is from \eqref{set-e30}.

Let us estimate the integrals in the last line of \eqref{app-e50}. Without loss of generality we assume that $C_2=1$. Put
$h(r):= \sup_{\xi\in \rn} \varpi\left\{ y\in D \,:\, |\xi-y|\leq r\right\}$.
Split
\begin{align*}
    J
    := \left[\int_0^1\int_0^1 + \int_1^\infty\int_0^1\right] \frac{h(ur)}{(q^*)^{\gamma}(1/u)} \, \frac{du}{u^{n+1}} \, e^{-r}\,dr
    =: J_1 + J_2.
\end{align*}
From the  monotonicity of $h(r)$ and the assumption \eqref{s4-e15}
$$
    J_1
    \leq \int_0^1 \frac{h(u)}{(q^*)^{\gamma}(1/u)} \,\frac{du}{u^{n+1}}
    \cdot \int_0^1 e^{-r}\,dr
    <\infty.
$$
Using the monotonicity of $q^*$, we get
 \begin{align*}
    J_2
    &\leq c_1\int_1^\infty \left[\int_0^{r}  \frac{h(v)}{(q^*)^{\gamma}(1/v)} \,\frac{dv}{v^{n+1}} \right] r^n e^{-r}\,dr\\
    &= \left[ \int_1^\infty \int_0^1 + \int_1^\infty \int_1^r \right] \frac{h(v)}{(q^*)^{\gamma}(1/v)} \,r^n e^{-r}\, \frac{dv}{v^{n+1}} \,dr
     =: J_{21} + J_{22}.
 \end{align*}
 Clearly, $J_{21}<\infty$. For $J_{22}$ we have
\begin{align*}
    J_{22}
    &\leq \int_0^\infty \left[\int_{v}^\infty r^n e^{-r} \,dr\right]  \frac{h(v)}{(q^*)^{\gamma}(1/v)} \,\frac{dv}{v^{n+1}}\\
    &\leq c_2\int_0^\infty e^{-\epsilon v} \left[\int_{v}^\infty r^n e^{-(1-\epsilon)r}\, dr \right]  \frac{h(v)}{(q^*)^{\gamma}(1/v)} \,\frac{dv}{v^{n+1}}\\
    &\leq c_3 \int_0^\infty e^{-\epsilon v } \frac{h(v)}{(q^*)^{\gamma}(1/v)} \,\frac{dv}{v^{n+1}}.
 \end{align*}
By \eqref{s4-e15} and the fact that the integrand is bounded by $C e^{\epsilon v}$ for $v>1$ show that the integral in the last line is finite. Thus, \eqref{app-e40} holds true, implying that $\sup_{x\in \rn}I_{12}(x,T)\to 0$ as $T\to 0$.

Let us estimate $I_{11}(x,T)$. Define $\phi(u) := 1/\theta_u$,
$$
    I(v,\tau,T) := \I_{ \{\tau \leq T^{-1/\gamma}\}}  e^{-\epsilon v /(2 \phi( T^{1/\gamma}  \tau))},
$$
and recall that
$h(r):= \sup_{\xi\in \rn} \varpi\left\{ y\in D \,:\, |\xi-y|\leq r\right\}$.
By the same arguments as those which we have used in \eqref{app-e50}, we derive
\begin{align*}
    &\sup_{x\in \rn} I_{11}(x,T)\\
    &\leq c_1 \sup_{x\in \rn}\int_0^{T^{-1/\gamma}} \int_0^{T\tau^\gamma} \int_D  \theta_{v^{1/\gamma}}^n \cdot \big(f_{\textrm{up}} (c_2\theta_{v^{1/\gamma}}  \cdot) *Q_{v^{1/\gamma}}  \big)(x-y) \tau^{-\gamma}\sigma_1^{(\gamma)}(\tau) \,\varpi(dy) \,dv \,d\tau\\
    &\leq c_2  \int_0^{T^{-1/\gamma}}  \int_0^\infty \int_0^{\phi(T^{1/\gamma}\tau)} \frac{h(ur)}{(q^*)^{\gamma}(1/u)} \,e^{-c_3 r} \tau^{-\gamma}\sigma_1^{(\gamma)}(\tau)  \,\frac{du}{u^{n+1}} \,dr \,d\tau\\
    &\leq c_2 \int_0^{T^{-1/\gamma}}  \int_0^1 \int_0^{\phi(T^{1/\gamma}\tau)} \frac{h(u)}{(q^*)^{\gamma}(1/u)} \,e^{-c_3 r} \tau^{-\gamma}\sigma_1^{(\gamma)}(\tau) \,\frac{du}{u^{n+1}} \,dr \,d\tau\\
    &\qquad\mbox{}+ c_2 \int_0^{T^{-1/\gamma}} \int_1^\infty \int_0^{\phi(T^{1/\gamma}\tau)} \frac{h(ur)}{(q^*)^{\gamma}(1/u)} \,e^{-c_3 r} \tau^{-\gamma}\sigma_1^{(\gamma)}(\tau) \,\frac{du}{u^{n+1}} \,dr \,d\tau.
\end{align*}

The first term is estimated from above by
$$
    \int_0^{T^{-1/\gamma}} \int_0^{\phi(T^{1/\gamma}\tau)}
        \frac{h(u)}{(q^*)^{\gamma}(1/u)}\, \tau^{-\gamma}\sigma_1^{(\gamma)}(\tau)  \,\frac{du}{u^{n+1}}\, d\tau,
$$
which tends to zero as $T\to 0$ by the dominated convergence theorem.

For the second term we have for some $\epsilon>0$
\begin{align*}
    \int_0^{T^{-1/\gamma}} &\int_1^\infty \int_0^{r \phi(T^{1/\gamma}\tau)}
        \frac{h(v)}{(q^*)^{\gamma}(1/v)}\, r^n  e^{-c_3 r} \tau^{-\gamma}\sigma_1^{(\gamma)}(\tau)   \,\frac{dv}{v^{n+1}}\,dr\,d\tau\\
    &\leq  \int_0^{T^{-1/\gamma}}  \int_0^\infty \int_{v/\phi( T^{1/\gamma}  \tau)}^\infty
        \frac{h(v)}{(q^*)^{\gamma}(1/v)} \,r^n  e^{-c_3 r} \tau^{-\gamma}\sigma_1^{(\gamma)}(\tau) dr\, \frac{dv}{v^{n+1}} \,d\tau\\
    &\leq c_4 \int_0^{T^{-1/\gamma}}  \int_0^\infty e^{- \epsilon v/\phi( T^{1/\gamma}  \tau)}
    \frac{h(v)}{(q^*)^{\gamma}(1/v)} \tau^{-\gamma}\sigma_1^{(\gamma)}(\tau)  \,\frac{dv}{v^{n+1}} \,d\tau \\
    &\leq  c_4 \int_0^\infty \int_0^\infty
    \frac{ e^{-\epsilon v/2\phi(1)}h(v)}{(q^*)^{\gamma}(1/v)}\,  \tau^{-\gamma}\sigma_1^{(\gamma)}(\tau)  I(v,\tau,T)\,  \frac{dv}{v^{n+1}}\, d\tau.
\end{align*}
Note that $I(v,\tau, T)\leq 1$, and $\lim_{T\to 0}I(v,\tau,T)= 0$ a.e. From Euler's Gamma-integral
$s^{-a} = \Gamma(a)^{-1}\int_0^\infty e^{-s x}x^{a-1}\,dx$, $a>0$, we derive
\begin{align*}
    \int_0^\infty  s^{-a} \sigma_1^{(\gamma)}(s)\,ds
    = \int_0^\infty\int_0^\infty\frac{e^{-s x}x^{a-1}}{\Gamma(a)}\, \sigma_1^{(\gamma)}(s)\,ds\,dx
    = \int_0^\infty \frac{e^{-x^\gamma} x^{a-1}}{\Gamma(a)}\,dx
    =\frac{\Gamma(a/\gamma)}{\gamma\Gamma(a)}.
\end{align*}
By a dominated convergence argument, $\lim_{T\to 0}\sup_{x\in \rn} I_{11}(x,T)= 0$. This finishes the proof of a).

\bigskip
\textbf{b)} Without loss of generality we may assume that $T\in (0,1/2]$. Using \eqref{set-e35},  we have
\begin{align*}
    \int_0^T \int_D &p^{(\gamma)}(t,x,y) \,\varpi(dy) \,dt\\
    &\geq \int_0^T \int_D\int_0^\infty \rho_s^n f_{\textrm{low}}(\rho_s (x-y)) t^{-1/\gamma} \sigma_1^{(\gamma)} ( t^{-1/\gamma}s ) \,ds \,\varpi(dy) \,dt\\
    &\geq \int_0^T \int_D\int_0^1 \rho_s^n f_{\textrm{low}}(\rho_s (x-y))t^{-1/\gamma} \sigma_1^{(\gamma)}  ( t^{-1/\gamma}s ) \,ds \,\varpi(dy) \,dt \\
    &= \int_0^T \int_0^{t^{-1/\gamma}} \int_D \rho_{\tau t^{1/\gamma}}^n  f_{\textrm{low}} (\rho_{\tau t^{1/\gamma}}(x-y)) \sigma_1^{(\gamma)} (\tau) \,\varpi(dy) \,d\tau \,dt\\
    &= \int_0^{T^{-1/\gamma}} \int_0^T \int_D \rho_{\tau t^{1/\gamma}}^n f_{\textrm{low}} (\rho_{\tau t^{1/\gamma}}(x-y)) \sigma_1^{(\gamma)}  (\tau)  \,\varpi(dy) \,dt  \,d\tau\\
    &\qquad\mbox{} + \int_{T^{-1/\gamma}}^\infty \int_0^{\tau^{-\gamma}} \int_D  \rho_{\tau t^{1/\gamma}}^n f_{\textrm{low}} (\rho_{\tau t^{1/\gamma}}(x-y)) \sigma_1^{(\gamma)}(\tau) \,\varpi(dy) \,dt  \,d\tau\\
    &\geq \int_0^{T^{-1/\gamma}} \int_0^T \int_D \rho_{\tau t^{1/\gamma}}^n f_{\textrm{low}} (\rho_{\tau t^{1/\gamma}}(x-y))  \sigma_1^{(\gamma)}(\tau) \,\varpi(dy) \,dt  \,d\tau\\
    &= \int_0^{T^{-1/\gamma}} \int_0^{\tau^\gamma T} \int_D \rho_{v^{1/\gamma}}^n  f_{\textrm{low}} (\rho_{v^{1/\gamma}}(x-y))\tau^{-\gamma} \sigma_1^{(\gamma)}(\tau) \,\varpi(dy) \,dv  \,d\tau\\
    &\geq \int_1^{2^{1/\gamma}} \tau^{-\gamma} \sigma_1^{(\gamma)}(\tau) \,d\tau
    \int_0^T \int_D  \rho_{v^{1/\gamma}}^n  f_{\textrm{low}}(\rho_{v^{1/\gamma}}(x-y)) \,\varpi(dy) \,dv.
\end{align*}
Using the form of $f_{\textrm{low}}$ and the fact that $\rho_t \asymp \theta_t$, we see that the double integral is bounded from below by
$$
    \int_0^T\int_D  \theta_{v^{1/\gamma}}^n  f_{\textrm{low}} (c\theta_{v^{1/\gamma}}(x-y)) \,\varpi(dy) \,dv,
$$
where $c>0$ is some constant. Proceeding as in the estimate for  $I_{11}(x,T)$, we get for this expression
\begin{align*}
    \int_0^T\int_D  &\theta_{v^{1/\gamma}}^n  f_{\textrm{low}} (c_2\theta_{v^{1/\gamma}}(x-y)) \,\varpi(dy) \,dv\\
    &= \int_0^T \int_0^1  \theta_{v^{1/\gamma}}^n  \varpi\left\{ y\in D\,:\, c_2 d_2 \theta_{v^{1/\gamma}}|x-y|\leq r\right\} \,dr \,dv\\
    &\geq c_3 \int_0^{\phi(T^{1/\gamma})} \int_0^1 \frac{\varpi\left\{ y\in D\,:\, |x-y|\leq r u\right\}}{(q^*)^\gamma(1/u)}\,dr \,\frac{du}{u^{n+1}}\\
    &\geq \frac{c_3}2  \int_0^{\phi(T^{1/\gamma})} \frac{\varpi\left\{ y\in D \,:\, |x-y|\leq 2^{-1} u\right\}}{(q^*)^\gamma(1/u)}\, \frac{du}{u^{n+1}}.
\end{align*}
Combining everything, we have shown
\begin{equation}\label{app-e60}
    \int_0^T \int_D p^{(\gamma)}_t(x,y) \,\varpi(dy) \,dt
    \geq \textrm{const.} \int_0^{\phi(T^{1/\gamma})} \frac{\varpi\left\{ y\in D \,:\, |x-y|\leq 2^{-1} u\right\}}{(q^*)^\gamma(1/u)} \,\frac{du}{u^{n+1}}.
\end{equation}
Therefore, whenever $\varpi\in \Kato$  with respect to $p^{(\gamma)}_t(x,y)$, then   \eqref{s4-e20} holds true.
\qed

\end{document}